\documentclass[12pt, reqno]{amsart}
\usepackage{amsmath,amsfonts,amsthm,amssymb,latexsym}
\usepackage[a4paper,margin=3cm]{geometry}
\usepackage{graphicx}
\usepackage{url}
%%%%% APPELS DE PACKAGES %%%%%%%%%%%%%%%%%%%%
\usepackage[latin1]{inputenc}
\usepackage[T1]{fontenc} 
\usepackage{graphicx}
\usepackage[english]{babel}
\usepackage{calc}
\usepackage{amsmath,amsthm}
\usepackage{url}
\usepackage{times, amssymb, amscd, mathrsfs, graphicx, color}  
\usepackage{enumerate}
\usepackage{hyperref}
\usepackage{dsfont}
 \usepackage[all]{xy}
\selectlanguage{english}

\definecolor{darkred}{rgb}{0.9,0.1,0.1}

\setlength{\marginparwidth}{2cm}

\setlength{\marginparwidth}{2cm}

\newtheorem{theo}{Theorem}[section]

\newtheorem{coro}[theo]{Corollary}
\newtheorem{lem}[theo]{Lemma}
\newtheorem{exe}[theo]{Example}

\newtheorem{Rq}[theo]{Remark}

\theoremstyle{plain}
\newtheorem*{hypo*}{Assumption}

%\newtheorem{Not}{Notation}
%\renewcommand{\theNot}{\empty{}}

%%%%% COMMANDES PERSONNALISEES %%%%%%%%%%%%%%

%\newcommand{\1}{\mathds{1}} 
 
                                              % diverses redfinitions :
                                              % pour obtenir par exemple la
                                              % lettre de l'ensemble des rels,
                                              % on n'aura plus besoin de taper
                                              % \mathbb{R}, mais simplement \R.

\newcommand{\E}{\mathbb{E}}                                            %pr avoir les P de proba tt de suite et E pr l esperance
 %pour les combinaison

\newcommand{\var}{\textrm{Var}}

%pour l'égalité en loi

%\topmargin -2cm \oddsidemargin 1cm \evensidemargin 1cm \textheight 25cm         % pour rgler les marges

%\addtolength{\hoffset}{-1cm} % commence le texte 1 cm de plus  gauche
%\addtolength{\textwidth}{ 2 cm} % ecrit 2cm de plus sur les cts
%les commandes ci dessus rgle la marge; nb pr le haut et bas \voffset \textheight

%%%%% PROPRIETES DU DOCUMENT %%%%%%%%%%%%%%%%
\title{Birth and Death Process in Mean Field type interaction} %

\date{\today}

\author{Marie-No\'{e}mie~\textsc{Thai}} %
\address[Marie-No\'{e}mie~\textsc{Thai}]{CEREMADE, Universit\'e Paris-Dauphine,
  France} %
\email{\url{noemiethai@yahoo.fr}}

 %Partially supported by ANR-11-LABX-0040-CIMI within the program ANR-11-IDEX-0002-02
%\thanks{Support from AMI*DEX}

%\subjclass[2000]{60J80 ; 39C05}

\parindent=0pt %retire les alinea

%%%%% COMPOSITION DU TEXTE %%%%%%%%%%%%%%%%%%
\begin{document}

\maketitle
\begin{abstract} 
The aim of this paper is to study the asymptotic behavior of a system of birth and death processes in mean field type interaction in discrete space. We first establish the exponential convergence of the particle system to equilibrium for a suitable Wasserstein coupling distance. The approach provides an explicit quantitative estimate on the rate of convergence. We prove next a uniform propagation of chaos property. As a consequence, we show that the limit of the associated empirical distribution, which is the solution of a nonlinear differential equation, converges exponentially fast to equilibrium. This paper can be seen as a discrete version of the particle approximation of the McKean-Vlasov equations and is inspired from previous works of Malrieu and al and Caputo, Dai Pra and Posta. 
\end{abstract}

{\footnotesize \textbf{AMS 2000 Mathematical Subject Classification:} 60K35, 60K25, 60J27, 60B10, 37A25.}

{\footnotesize \textbf{Keywords:} Interacting particle system - mean field - coupling - Wasserstein distance - propagation of chaos.}

\renewcommand*\abstractname{ \ }
\begin{abstract}
\tableofcontents
\end{abstract}

\section{Introduction}
The concept of \textit{mean field interaction} arised in statistical physics with Kac \cite{Kac56} and then McKean \cite{McKean67} in order to describe the collisions between particles in a gas, and has later been applied in other areas such as biology or communication networks. A particle system is in mean field interaction when the system acts over one fixed particle through the empirical measure of the system. For continuous interacting diffusions, this linear particle system has been introduced in order to approximate the solution of a non linear equation, the so-called McKean-Vlasov equation, and has been extensively studied by many authors. But, to our knowledge, there are few results in discrete space. 

In this paper, we give a discrete version of the particle approximation of the McKean-Vlasov equations. We consider a system of $N$ particles $X^{1,N}, \dots, X^{N,N}$ evolving in $\mathbb{N}=\{0,1,2,\dots\}$, each one according to a birth and death process in mean field type interaction. Namely, a single particle evolves with a rate which depends on both its own position and the mean position of all particles. At time $0$, the particles are independent and identically distributed and at time $t>0$, the interaction is given in terms of the mean of the particles at this time.
Let $q^{+}: \mathbb{N} \times \mathbb{R}_{+} \rightarrow \mathbb{R}_{+}$ and $q^{-}: \mathbb{N} \times \mathbb{R}_{+} \rightarrow \mathbb{R}_{+}$ be the interaction functions. For any particle $k \in \{1, \dots, N\}$ and position $i \in \mathbb{N}$, the transition rates at time $t$, given $\left(X_t^{1,N}, \dots, X_t^{N,N}\right)$ and $X_t^{k,N}=i$ are
\[ 
  \begin{array}{ c c c c c}
     i & \rightarrow & i+1 & \text{with rate} \ b_i + q^{+}(i,M_{t}^{N}), &\\
     i & \rightarrow & i-1 & \text{with rate} \ d_i + q^{-}(i,M_{t}^{N}), & \text{for} \ i\geq 1\\
     i & \rightarrow & j & \text{with rate} \ 0, & \ \text{if} \ j\notin \{i-1,i+1\},\\
  \end{array} 
\]

where $b_i>0$ for $i \geq 0$, $d_i >0$ for $i\geq 1$, $d_0 = q^{-}(0,m) = 0$ for all $m\in \mathbb{R}_+$ and $M_{t}^{N}$ denotes the mean of the $N$ particles at time $t$, defined by
$$
M_{t}^{N}= \dfrac{1}{N} \sum_{i=1}^{N} X_{t}^{i,N}.
$$

By the positivity of the rates $b_i,d_i$, the process is irreducible but not necessary reversible.
The generator of the particle system acts on bounded functions $f: \mathbb{N}^{N} \rightarrow \mathbb{R}$ as follows
\begin{align}
\label{eq:generator}
\mathcal{L}f(x) 
&= \sum_{i=1}^{N} \Big[\left(b_{x_{i}} + q^{+}(x_{i},M^{N})\right) \left(f(x+e_i)-f(x)\right)\\
& \qquad \qquad  +\left(d_{x_{i}} + q^{-}(x_{i},M^{N})\right) \left(f(x-e_i)-f(x)\right) \mathds{1}_{x_i>0} \Big]\nonumber,
\end{align}
where $e_i, i\in \mathbb{N}$ is the canonical vector and
$$
M^{N}=M^{N}(x)= \dfrac{1}{N} \sum_{i=1}^{N} x_i.
$$

\bigskip 

For such systems, we consider the limiting behavior as time and the size of the system go to infinity. As $N$ tends to infinity, this leads to the propagation of chaos phenomenon: the law of a fixed number of particles becomes asymptotically independent as the size of the system goes to infinity \cite{Sznitman89}. Sznitman \cite[Proposition 2.2]{Sznitman89} (see also M\'{e}l\'{e}ard \cite[Proposition 4.2]{Meleard96}) showed that, in the case of exchangeable particles, this is equivalent to the convergence in law of the empirical measure to a deterministic measure.
This limiting measure, denoted by $u$ and often called the mean field limit, is characterized by being the unique weak solution of the nonlinear master equation
\begin{equation}
\label{eq:nonlinearMasterEq}
\left\{
    \begin{array}{ll}
       \dfrac{d}{dt} \langle u_t,f\rangle = \langle u_t, \mathcal{G}_{u_t}f\rangle \\
        \quad \quad u_0 \in \mathcal{M}_1(\mathbb{N}),
    \end{array}
\right.
\end{equation}

where $\mathcal{M}_{1}(\mathbb{N})$ is the set of probability measures on $\mathbb{N}$ and $\mathcal{G}_{(\cdot)}$ is the operator defined through the behavior of the particle system by 
\begin{equation}
\label{eq:generatorParticle}
\mathcal{G}_{u_t}f(i)= \left(b_i + q^{+}(i,\Vert u_t \Vert)\right)\left(f(i+1)-f(i)\right) + \left(d_i + q^{-}(i,\Vert u_t \Vert)\right)\left(f(i-1)-f(i)\right) \mathds{1}_{i>0},
\end{equation}
for every $i \in \mathbb{N}$. For any probability measure $u$ and bounded function $f$, the linear form $\langle u,f \rangle$ is defined by 
$$
\langle u,f \rangle  = \sum_{k \in \mathbb{N}} f(k) u(\{k\}),
$$
and 
$$
\Vert u \Vert  = \langle u(dx),x \rangle \ \text{is the first moment of $u$}.
$$

\bigskip

We use the notation $\Vert \cdot \Vert$ to be consistent with the previous works \cite{DTZ05,FZ92}.
%Actually, the notation $\Vert \cdot \Vert$ is not a norm but this slight abuse of notation is used to be consistent with other previous works \cite{DTZ05,FZ92}.
The propagation of chaos phenomenon can be seen as giving both the asymptotic behavior of an interacting particle system and an approximation of solutions of nonlinear differential equations. 

\bigskip

We introduce a stochastic process $(\overline{X}_t)_{t\geq 0}$ whose time marginals are solutions of the nonlinear equation  \eqref{eq:nonlinearMasterEq}. This process is defined as a solution of the following martingale problem: for a nice test function $\varphi$
$$
\varphi(\overline{X}_t)-\varphi(\overline{X}_0)-\int_0^{t} \mathcal{G}_{u_s}\varphi(\overline{X}_s) ds \ \text{is a martingale},
$$
where $u_s = u \circ \overline{X}_s^{-1}$ and $u_0 = u\circ \overline{X}_0^{-1}$.

Under some assumptions, the existence and uniqueness of the solution of the martingale problem are insured. If $q^{-}\equiv 0$, this has been proved by Dawson, Tang and Zhao in \cite{DTZ05} and before by  Feng and Zheng \cite{FZ92} in the case where $q^{+}(i,\Vert u_t \Vert)=\Vert u_t \Vert$ for all $i\in \mathbb{N}$. This unique solution is a solution of \eqref{eq:nonlinearMasterEq} (cf  \cite[Theorem 2]{DTZ05}, \cite[Theorem 1.2]{FZ92}). 

\bigskip

For any $t\geq 0$, let us denote by $\mu_{t}^{N}$ the empirical distribution of $(X^{1,N}, \dots, X^{N,N})$ at time $t$ defined by
\begin{equation}
\label{eq:meaEmpirique}
\mu_{t}^{N} = \dfrac{1}{N} \sum_{i=1}^{N} \delta_{X_{t}^{i,N}} \in \mathcal{M}_{1}(\mathbb{N}).
\end{equation}

This measure is a random measure on $\mathbb{N}$ and we note that the first moment of $\mu_{t}^{N}$ is exactly the value of $M_t^{N}$. Our goal is to quantify the following limits (if they exist)
%\[ 
%  \begin{array}{ c c c }
%     \mu_{t}^{N} & \underset{ t \rightarrow + \infty}{\stackrel{(a)}{\longrightarrow}} & \mu_{\infty}^{N}\\
%      \text{\small{(b)}} \big\downarrow  & \ &  \big\downarrow \text{\small{(c)}}\\     
%     u_t & \underset{ t \rightarrow + \infty}{\stackrel{(d)}{ \longrightarrow }} & u_{\infty}
%  \end{array} 
%\]
%where all limits are in distribution and the limits $(b),(c)$ are taken as $N$ tends to infinity.
\begin{figure}[hbtp]
$$
\xymatrix{
    & \mu_t^{N} \ar[rd]^{t \rightarrow \infty} \ar[ld]_{N \rightarrow \infty} & \\
	u_t \ar[rd]_{t \rightarrow \infty} &  & \mu_{\infty}^{N} \ar[ld]^{N \rightarrow \infty} \\
	& u_{\infty} &
}
$$  
\end{figure}
\bigskip

Similar problems for interacting diffusions of the form
$$
d X_t^{i,N} = \sqrt{2} dB_t^{i,N} - \nabla V(X_t^{i,N})dt - \dfrac{1}{N} \sum_{j=1}^{N} \nabla W(X_t^{i,N}-X_t^{j,N})dt \quad 1\leq i \leq N, 
$$
have been studied in several works \cite{CGM10,Malrieu01,Malrieu03}. Here, $(B_t^{i,N})_{1\leq i \leq N}$ are $N$ independent Brownian motions, $V$ and $W$ are two potentials and the symbol $\nabla$ stands for the gradient operator.
The mean field limit associated to these dynamics satisfies the so-called McKean-Vlasov equation.

%Originally, the mean field approach has been introduced in statistical physics by Kac \cite{Kac56} and then by McKean \cite{McKean67} in order to describe the collisions between particles in a gas, and has been later applied in other areas such that biology or communication networks.
 
Our initial motivation was the study of the Fleming-Viot type particle systems. In this system, the particles evolve as independent copies of a Markov process until one of them reaches the absorbing state. At this moment, the absorbed particle goes instantaneously to a state chosen with the empirical distribution of the particles remaining in the state space. For example, if we consider $N$ random walks on $\mathbb{N}$ with a drift towards the origin (cf \cite{AT12,Maric14}), the Fleming-Viot system can be interpreted as a system of $N$ $M/M/1$ queues in interaction: when a queue is empty, another duplicates. The Fleming-Viot process has been introduced in order to approximate the solutions of a nonlinear equation: the quasi-stationary distributions. For results and related methods, we refer to \cite{AFG,AT12,BHIM,CT13,FM07,AFGJ} and references therein.

\bigskip
\subsection*{Long time behavior of the particle system}
Our starting point is the long time behavior of the interacting particle system.
We show that under some conditions, the particle system converges exponentially fast to equilibrium for a suitable Wasserstein coupling distance. Let us describe first the different distances that we use and the assumptions that we make. For $x,\overline{x} \in \mathbb{N}^{N}$, let $d$ be the $l^{1}$-distance defined by
\begin{equation}
\label{eq: distance}
d(x,\overline{x}) = \sum_{k=1}^{N} \vert x_{k}- \overline{x}_{k} \vert,
\end{equation}
and for any two probability measures $\mu$ and $\mu'$ on $\mathbb{N}$, let $\mathcal{W}(\mu, \mu')$ be the Wasserstein coupling distance between these two laws defined by
\begin{equation}
\label{eq:Wassdistance}
\mathcal{W}(\mu, \mu') = \inf_{\substack{X \sim \mu\\ \overline{X} \sim \mu'}} \E\left[ d(X,\overline{X})\right],
\end{equation}
where the infimum runs over all the couples of random variables with marginal laws $\mu$ and $\mu'$. We note that, as the distance $d$ is the $l^{1}$-distance, the associated Wasserstein distance $\mathcal{W}$ defined in \eqref{eq:Wassdistance} is in fact the so-called $\mathcal{W}_1$-Wasserstein distance. Let us assume that:
\begin{hypo*} \
\begin{enumerate}
\item[$(A)$](\textit{Convexity condition}) There exists $\lambda>0$ such that 
$$
\nabla^{+}(d - b) \geq \lambda, 
$$
where for every $n\geq 0$ and $f: \mathbb{N}\rightarrow \mathbb{R}_+$
$$
\nabla^{+}(f)(n) = f(n+1)-f(n).
$$

%\item[$(B)$] The function $q: \mathbb{N} \times \mathbb{R}_+ \rightarrow \mathbb{R}_+$ is non-decreasing in its first component and non-increasing in its second component. Moreover, there exists $\alpha>0$ such that for any $\left( k_1,l_1\right),\left(k_2,l_2\right) \in \mathbb{N} \times \mathbb{R}_+$
%$$
%\left(q(k_1,l_1)-q(k_2,l_2)\right)_{+} \leq \alpha\left(\left(k_1-k_2\right)_{+} + \left(l_2-l_1\right)_{+}\right).
%$$
\item[$(B)$](\textit{Lipschitz condition}) The function $q^{+}$ (resp. $q^{-}$) is %non-increasing (resp. non-decreasing) in its first component and 
non-decreasing (resp. non-increasing) in its second component and there exists $\alpha>0$ such that for any $\left( k_1,l_1\right),\left(k_2,l_2\right) \in \mathbb{N} \times \mathbb{R}_+$
$$
\vert \left(q^{+}-q^{-}\right)(k_1,l_1)- \left(q^{+}-q^{-}\right)(k_2,l_2) \vert \leq \alpha \left(\vert k_1-k_2 \vert + \vert l_1-l_2\vert \right).
$$
\end{enumerate}
\end{hypo*}

\bigskip
An example of rates satisfying the assumptions $(A)$ and $(B)$ is given in Example \ref{ex:example}.
Denoting by $\mathrm{Law}(Y)$ the law of a random variable $Y$, we have

\begin{theo}[Long time behavior]
\label{th:coupling}
Assume that Assumptions $(A)$ and $(B)$ are satisfied. Then, for all $t\geq 0$
\begin{equation}
\label{eq:coupling}
\mathcal{W}(\mathrm{Law}(X_t^{N}), \mathrm{Law}(Y_t^{N})) \leq e^{-(\lambda-2\alpha) t} \mathcal{W}(\mathrm{Law}(X_0^{N}), \mathrm{Law}(Y_0^{N})),
\end{equation} 
where $(X_t^{N})_{t\geq 0}$ and $(Y_t^{N})_{t\geq 0}$ are two processes generated by \eqref{eq:generator}.\\ 
In particular, if $\lambda-2\alpha>0$ there exists a unique invariant distribution $\lambda_{N}$ satisfying for every $t \geq 0$,
$$
\mathcal{W}(\mathrm{Law}(X_t^{N}), \lambda_{N}) \leq e^{-(\lambda-2\alpha) t} \mathcal{W} (\mathrm{Law}(X_0^{N}), \lambda_{N}).
$$
\end{theo}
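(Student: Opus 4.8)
The plan is to prove the estimate by a coupling argument combined with Gronwall's lemma, and then to deduce existence and uniqueness of the invariant law from the resulting contraction. First I would construct a Markovian coupling $(X_t^N, Y_t^N)_{t \geq 0}$ of two copies of the dynamics \eqref{eq:generator}, living on $\N^N \times \N^N$, and estimate the action of its generator $\widetilde{\mathcal{L}}$ on the distance $d(x,\overline{x}) = \sum_{k=1}^N |x_k - \overline{x}_k|$. If I can establish the pointwise bound
$$\widetilde{\mathcal{L}} d (x, \overline{x}) \leq -(\lambda - 2\alpha)\, d(x, \overline{x}),$$
then Dynkin's formula gives $\frac{d}{dt}\E[d(X_t^N, Y_t^N)] \leq -(\lambda - 2\alpha)\,\E[d(X_t^N, Y_t^N)]$, and Gronwall yields exponential decay of $\E[d(X_t^N, Y_t^N)]$. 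Choosing the initial pair $(X_0^N, Y_0^N)$ to realize the optimal coupling of $\mathrm{Law}(X_0^N)$ and $\mathrm{Law}(Y_0^N)$, and using that $\mathcal{W}(\mathrm{Law}(X_t^N), \mathrm{Law}(Y_t^N)) \leq \E[d(X_t^N, Y_t^N)]$ since the coupled pair is admissible at every time, then delivers \eqref{eq:coupling}.

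For the coupling itself I would use the coordinate-wise basic (monotone) coupling: for each $k$ the two birth clocks are synchronised at the common rate $\min(\beta_k, \overline{\beta}_k)$ and run independently on the excess rates, and likewise for the death clocks, where $\beta_k = b_{x_k} + q^+(x_k, M^N(x))$, $\overline{\beta}_k = b_{\overline{x}_k} + q^+(\overline{x}_k, M^N(\overline{x}))$ are the birth rates and $\delta_k, \overline{\delta}_k$ the corresponding death rates. A case analysis shows that, assuming without loss of generality $x_k \geq \overline{x}_k$, the contribution of coordinate $k$ to $\widetilde{\mathcal{L}} d$ equals $(\beta_k - \overline{\beta}_k) - (\delta_k - \overline{\delta}_k)$ when $x_k > \overline{x}_k$ (the synchronised moves leave $|x_k - \overline{x}_k|$ unchanged, while the excess rates push the two coordinates together or apart), and equals $|\beta_k - \overline{\beta}_k| + |\delta_k - \overline{\delta}_k|$ when $x_k = \overline{x}_k$. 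The boundary at $0$ causes no trouble, since $d_0 = q^-(0,\cdot) = 0$ makes the death rate vanish there automatically and no coordinate can become negative.

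The heart of the estimate is to bound each coordinate's contribution by $-\lambda |x_k - \overline{x}_k| + \alpha\big(|x_k - \overline{x}_k| + |M^N(x) - M^N(\overline{x})|\big)$. When $x_k > \overline{x}_k$ I regroup $(\beta_k - \overline{\beta}_k) - (\delta_k - \overline{\delta}_k)$ into the drift part $-\big[(d-b)(x_k) - (d-b)(\overline{x}_k)\big]$ and the interaction part $(q^+ - q^-)(x_k, M^N(x)) - (q^+ - q^-)(\overline{x}_k, M^N(\overline{x}))$; Assumption $(A)$ controls the first by $-\lambda|x_k - \overline{x}_k|$ through $\nabla^{+}(d-b)\geq\lambda$, and the Lipschitz bound in $(B)$ controls the second by $\alpha(|x_k - \overline{x}_k| + |M^N(x) - M^N(\overline{x})|)$. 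The subtle case is $x_k = \overline{x}_k$: here the bound $|\beta_k - \overline{\beta}_k| + |\delta_k - \overline{\delta}_k|$ involves $q^+$ and $q^-$ separately, and it is precisely the monotonicity in $(B)$ (with $q^+$ non-decreasing and $q^-$ non-increasing in the second variable) that lets the two absolute values recombine into $|(q^+ - q^-)(x_k, M^N(x)) - (q^+ - q^-)(x_k, M^N(\overline{x}))| \leq \alpha |M^N(x) - M^N(\overline{x})|$. Summing over $k$ and using $|M^N(x) - M^N(\overline{x})| \leq \tfrac{1}{N} d(x, \overline{x})$ turns the two $\alpha$-terms into $2\alpha\, d(x, \overline{x})$, producing the claimed rate $-(\lambda - 2\alpha)$.

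Finally, for the invariant measure I would view the flow $t \mapsto \mathrm{Law}(X_t^N)$ as a semigroup on the complete metric space of probability measures on $\N^N$ with finite first moment, equipped with $\mathcal{W}$. When $\lambda - 2\alpha > 0$, \eqref{eq:coupling} makes each time-$t$ map a strict contraction, so a Banach fixed point argument yields a unique invariant law $\lambda_N$; taking $Y_0^N \sim \lambda_N$, hence $Y_t^N \sim \lambda_N$ for all $t$, in \eqref{eq:coupling} gives the stated convergence. I expect the main obstacle to be analytic rather than algebraic: since $d$ is unbounded on $\N^N$ and the state space is infinite, applying Dynkin's formula to $d$ and differentiating $\E[d(X_t^N, Y_t^N)]$ are not immediate. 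This requires a localisation argument, stopping at the exit times of finite boxes, establishing non-explosion together with a uniform-in-time first-moment bound on the particles so that $\E[d(X_t^N, Y_t^N)]$ stays finite, and then passing to the limit to remove the truncation.
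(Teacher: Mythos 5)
Your proposal is correct and follows essentially the same route as the paper: the same basic monotone coupling (the paper synchronizes the $b$-rates and $q$-rates separately rather than the summed birth and death rates, which changes nothing in the resulting generator estimate), the same case analysis on the $l^1$ distance using Assumption $(A)$ for the drift part and the monotonicity plus Lipschitz property of Assumption $(B)$ to recombine the absolute values at coordinates where the two particles coincide, then the bound $\vert M^{N,1}-M^{N,2}\vert \leq d/N$, Gronwall's lemma, and optimization over the initial coupling. The points you flag as needing care are exactly the ones the paper treats separately or leaves implicit: non-explosion and moment control come from the Lyapunov function in the paper's Appendix, and the existence and uniqueness of $\lambda_N$ follow from the contraction, for which your fixed-point argument is a valid way to fill in the details.
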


To our knowledge, it is the first theorem which establishes an exponential convergence of this interacting particle system in discrete state space and with an explicit rate. For continuous interacting diffusions, Malrieu \cite{Malrieu01, Malrieu03} proved that under some assumptions of convexity of the potentials and based on the Bakry-\'{E}mery criterion, the system of particles associated with the McKean-Vlasov equations converges exponentially fast to equilibrium with a rate that does not depend on $N$ in terms of relative entropy.

\bigskip

\subsection*{Propagation of chaos} When $N$ is large, we would like to show that the random empirical distribution $\mu_t^{N}$ is close to the deterministic measure $u_t$, solution of \eqref{eq:nonlinearMasterEq} at time $t$. For this convergence, one of the interesting points is to obtain a quantitative bound.

This behavior has been studied by Dawson,Tang and Zhao \cite{DTZ05} and Dawson and Zheng \cite{DZ91}, the key ingredient of the proof being the tightness of $\{\mu^{N}: N\geq 1\}$. For interacting diffusions, this has been studied particularly by Sznitman \cite{Sznitman89}, M\'{e}l\'{e}ard \cite{Meleard96} or Malrieu \cite{Malrieu01, Malrieu03}. The central limit theorem and large deviation principles for such models have also been studied by many authors \cite{DG88,Feng94,HT81,McKean67,ST85,Sznitman84}. \\

In the following theorem, we prove the propagation of chaos property and show that this property is uniform in time. For this purpose, we introduce a family of independent processes $(\overline{X}^{i})_{i \in \{1, \dots, N\}}$ of law $u_t$ at each time $t$ such that for $i\in \{1, \dots, N\}$
\begin{itemize}
\item $\overline{X}_0^{i} = X_0^{i,N}$
\item the transition rates of $\overline{X}^{i}$ at time $t$ are given by
\[ 
  \begin{array}{ c c c c c}
     i & \rightarrow & i+1 & \text{with rate} \ b_i + q^{+}(i,\Vert u_t\Vert), &\\
     i & \rightarrow & i-1 & \text{with rate} \ d_i + q^{-}(i,\Vert u_t\Vert), & \text{for} \ i\geq 1\\
     i & \rightarrow & j & \text{with rate} \ 0, & \ \text{if} \ j\notin \{i-1,i+1\}.\\
  \end{array} 
\]
\end{itemize}
For $i\in \{1, \dots, N\}$ and $t\geq 0$, the process $\overline{X}_t^{i}$ is said to be nonlinear in the sense that its dynamic depends on its law. 
 
\begin{theo}[Uniform Propagation of chaos]
\label{th:PropChaos}
Assume that Assumptions $(A)$ and $(B)$ are satisfied. Assume moreover that the assumptions of Lemma \ref{lem:expoMoment} hold. Then, if $\lambda-2\alpha>0$, there exists a coupling and a constant $K>0$ such that 
\begin{equation}
\label{eq:chaos}
\sup_{t\geq 0} \E \vert X_t^{1,N} - \overline{X}_t^{1} \vert \leq \dfrac{K}{\sqrt{N}}.
\end{equation}
\end{theo}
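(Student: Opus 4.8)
The plan is to run a Sznitman-type coupling argument, reusing the coordinatewise drift estimate already established in the proof of Theorem~\ref{th:coupling} and feeding in the extra discrepancy between the empirical mean $M_t^N$ and the deterministic first moment $\Vert u_t\Vert$. First I would couple the two systems by driving $X^{i,N}$ and $\overline{X}^i$ with common Poisson clocks: for each coordinate the births (resp.\ deaths) of the two processes are synchronised up to the common rate, and only the excess rate produces an independent $\pm 1$ jump. Since $\overline{X}_0^i = X_0^{i,N}$ the coupling starts from distance zero. Because the initial data and the dynamics are exchangeable in $i$, it suffices to control
\[
h(t) := \E\,\vert X_t^{1,N} - \overline{X}_t^1\vert = \frac1N \sum_{i=1}^N \E\,\vert X_t^{i,N} - \overline{X}_t^i\vert .
\]

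Applying the coupled generator to $(x,\overline x)\mapsto \vert x - \overline x\vert$ coordinate by coordinate, exactly as in Theorem~\ref{th:coupling}, the drift of $\vert X_t^{i,N} - \overline{X}_t^i\vert$ (say on the event $X_t^{i,N} > \overline{X}_t^i$, the reverse case being symmetric) equals
\[
-\big[(d-b)(X_t^{i,N}) - (d-b)(\overline{X}_t^i)\big] + \big[(q^+\!-q^-)(X_t^{i,N},M_t^N) - (q^+\!-q^-)(\overline{X}_t^i,\Vert u_t\Vert)\big].
\]
The convexity condition $(A)$ bounds the first bracket below by $\lambda\,\vert X_t^{i,N}-\overline{X}_t^i\vert$, while the Lipschitz condition $(B)$ bounds the second in absolute value by $\alpha\big(\vert X_t^{i,N}-\overline{X}_t^i\vert + \vert M_t^N - \Vert u_t\Vert\vert\big)$. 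Taking expectations, summing over $i$ and dividing by $N$, I obtain
\[
h'(t) \le -(\lambda-\alpha)\,h(t) + \alpha\,\E\,\vert M_t^N - \Vert u_t\Vert\vert .
\]
The only new ingredient relative to Theorem~\ref{th:coupling} is the source term $\E\,\vert M_t^N - \Vert u_t\Vert\vert$, which I split as
\[
\E\,\vert M_t^N - \Vert u_t\Vert\vert \le \frac1N\sum_{j=1}^N \E\,\vert X_t^{j,N}-\overline{X}_t^j\vert + \E\Big\vert \frac1N\sum_{j=1}^N \overline{X}_t^j - \Vert u_t\Vert\Big\vert = h(t) + R_N(t),
\]
using again exchangeability for the first piece.

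The fluctuation term $R_N(t)$ is where the rate $1/\sqrt N$ and the uniformity in time are won. Since the $\overline{X}_t^j$ are i.i.d.\ with common law $u_t$ and $\Vert u_t\Vert = \E[\overline{X}_t^1]$, the average inside $R_N(t)$ is centred, so by independence and Cauchy--Schwarz
\[
R_N(t) \le \Big(\var\big(\tfrac1N\textstyle\sum_j \overline{X}_t^j\big)\Big)^{1/2} = \frac{\big(\var\,\overline{X}_t^1\big)^{1/2}}{\sqrt N} \le \frac{C}{\sqrt N},
\]
where $C := \sup_{t\ge 0}\big(\var\,\overline{X}_t^1\big)^{1/2} < \infty$ is finite precisely by the uniform-in-time moment control of Lemma~\ref{lem:expoMoment}. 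Substituting back gives the closed differential inequality
\[
h'(t) \le -(\lambda-2\alpha)\,h(t) + \frac{\alpha C}{\sqrt N},
\]
and since $h(0)=0$, Gronwall's lemma yields $h(t) \le \frac{\alpha C}{(\lambda-2\alpha)\sqrt N}$ for every $t$, which is the claim with $K = \alpha C/(\lambda-2\alpha)$. I expect the genuine difficulty to lie not in this Gronwall scheme but in securing the uniform variance bound $\sup_{t}\var\,\overline{X}_t^1 < \infty$: without uniformity in time the constant $C$ could blow up and only a finite-horizon estimate would follow, so Lemma~\ref{lem:expoMoment} — together with the sign condition $\lambda-2\alpha>0$ that lets the contraction survive the feedback of $h(t)$ through the mean — is the true engine of the theorem. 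A secondary point of care is to check that the synchronous coupling is admissible, so that the coordinatewise computation of Theorem~\ref{th:coupling} transfers verbatim to the mixed linear/nonlinear pair.
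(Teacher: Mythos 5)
Your proposal is correct and follows essentially the same route as the paper: the same mixed coupling between the interacting system and the i.i.d.\ nonlinear processes, the same coordinatewise drift estimate inherited from Theorem~\ref{th:coupling}, the same decomposition of $\E\vert M_t^N - \Vert u_t\Vert\vert$ into a coupling term plus a centred fluctuation term controlled by Cauchy--Schwarz and the uniform moment bound of Lemma~\ref{lem:expoMoment}, and the same closed Gronwall inequality $h'(t)\leq -(\lambda-2\alpha)h(t)+\alpha C/\sqrt{N}$ with $h(0)=0$. Your diagnosis that the uniform-in-time variance bound is the real engine of the result matches the paper's structure exactly.
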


For continuous interacting diffusions, Malrieu \cite{Malrieu01,Malrieu03} establishes for the first time a uniform (in time) propagation of chaos in the case of uniform convexity of the potentials. Cattiaux, Guillin, Malrieu \cite{CGM10} extend his results when the potentials are no more uniformly convex. The proof is based on a coupling argument and It\^{o}'s formula. 
%But, in our case, the It\^{o}'s formula does not apply.

\bigskip

A consequence of the uniform propagation of chaos phenomenon is the convergence of the empirical measure $\mu_t^{N}$ to the solution $u_t$ of the nonlinear equation. This convergence is well known but, here, we give an explicit rate. To express this convergence, we set for a function $\varphi: \mathbb{N}\rightarrow \mathbb{R}$ 
$$
\mu_t^{N}(\varphi) = \sum_{k \in \mathbb{N}} \varphi(k) \mu_t^{N}(k), \ \text{and} \ u_t(\varphi) = \sum_{k \in \mathbb{N}} \varphi(k) u_t(k).
$$
Moreover, we denote by $\Vert \varphi \Vert_{Lip}$ the quantity defined by 
$$
\Vert \varphi \Vert_{Lip} = \sup_{\substack{ x,y\in \mathbb{N} \\ x\neq y}} \dfrac{\vert \varphi(x)-\varphi(y) \vert}{\vert x-y \vert}.
$$

\bigskip

\begin{coro}[Convergence of the empirical measure]
\label{cor:ConvEmpirique}
Under the assumptions of Theorem \ref{th:PropChaos}, and if $\lambda-2\alpha>0$, there exists a constant $C>0$ such that
\begin{equation}
\label{eq:ConvEmpirique}
\sup_{t\geq 0} \sup_{\Vert \varphi \Vert_{Lip} \leq 1} \E \vert \mu_t^{N}(\varphi) -u_t(\varphi) \vert \leq \dfrac{C}{\sqrt{N}}.
\end{equation}
\end{coro}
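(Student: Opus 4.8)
The plan is to avoid comparing $\mu_t^N$ with the deterministic $u_t$ directly, and instead to route through the empirical measure of the independent nonlinear copies $(\overline{X}^i)_{1\le i\le N}$ introduced just before Theorem \ref{th:PropChaos}. Setting $\overline{\mu}_t^N = \frac1N\sum_{i=1}^N \delta_{\overline{X}_t^i}$, I would start from the triangle inequality: for every $\varphi$ with $\Vert \varphi \Vert_{Lip}\le 1$,
$$
\E\vert \mu_t^N(\varphi)-u_t(\varphi)\vert \le \E\vert \mu_t^N(\varphi)-\overline{\mu}_t^N(\varphi)\vert + \E\vert \overline{\mu}_t^N(\varphi)-u_t(\varphi)\vert .
$$
It then suffices to bound each term by a constant multiple of $N^{-1/2}$, uniformly in $t\ge 0$ and in $\varphi$.

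The first term is handled by the uniform propagation of chaos. Since $\varphi$ is $1$-Lipschitz,
$$
\vert \mu_t^N(\varphi)-\overline{\mu}_t^N(\varphi)\vert \le \frac1N\sum_{i=1}^N \vert \varphi(X_t^{i,N})-\varphi(\overline{X}_t^i)\vert \le \frac1N\sum_{i=1}^N \vert X_t^{i,N}-\overline{X}_t^i\vert .
$$
Taking expectations and using that, under the symmetric coupling of Theorem \ref{th:PropChaos} and the i.i.d.\ initial data, the pairs $(X_t^{i,N},\overline{X}_t^i)$ are exchangeable, each summand equals $\E\vert X_t^{1,N}-\overline{X}_t^1\vert$; Theorem \ref{th:PropChaos} bounds this by $K/\sqrt N$ uniformly in $t$.

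The second term is where independence enters. By construction $\overline{X}_t^1,\dots,\overline{X}_t^N$ are i.i.d.\ with common law $u_t$, so $\E[\overline{\mu}_t^N(\varphi)]=u_t(\varphi)$, and by Cauchy--Schwarz together with independence,
$$
\E\vert \overline{\mu}_t^N(\varphi)-u_t(\varphi)\vert \le \big(\var(\overline{\mu}_t^N(\varphi))\big)^{1/2} = \frac{1}{\sqrt N}\big(\var(\varphi(\overline{X}_t^1))\big)^{1/2}.
$$
Because variance is the minimal mean square deviation, $\var(\varphi(\overline{X}_t^1))\le \E[(\varphi(\overline{X}_t^1)-\varphi(0))^2]\le \E\vert \overline{X}_t^1\vert^2$ by the Lipschitz bound, so this term is at most $N^{-1/2}\,(\sup_{t\ge 0}\E\vert \overline{X}_t^1\vert^2)^{1/2}$.

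The main obstacle is precisely the uniform-in-time second moment $\sup_{t\ge 0}\E\vert \overline{X}_t^1\vert^2$: without a bound that does not degrade as $t\to\infty$, the second term would lose its uniformity and the supremum over $t$ would be infinite. This is exactly the role of the hypotheses of Lemma \ref{lem:expoMoment}, which are assumed in Theorem \ref{th:PropChaos} and guarantee a uniform control of the (exponential, hence polynomial) moments of the nonlinear process, ensuring $\sup_{t\ge 0}\E\vert \overline{X}_t^1\vert^2<\infty$. Combining the two estimates yields
$$
\sup_{t\ge 0}\ \sup_{\Vert \varphi \Vert_{Lip}\le 1}\E\vert \mu_t^N(\varphi)-u_t(\varphi)\vert \le \frac{K+\big(\sup_{t\ge 0}\E\vert \overline{X}_t^1\vert^2\big)^{1/2}}{\sqrt N},
$$
which is the claim with $C=K+\big(\sup_{t\ge 0}\E\vert \overline{X}_t^1\vert^2\big)^{1/2}$.
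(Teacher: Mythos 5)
Your proposal is correct and takes essentially the same route as the paper's own proof: the same decomposition of $\E\vert\mu_t^N(\varphi)-u_t(\varphi)\vert$ through the empirical measure of the independent nonlinear copies $\overline{X}_t^i$, with Theorem \ref{th:PropChaos} controlling the first term and Cauchy--Schwarz together with the uniform-in-time moment bound of Lemma \ref{lem:expoMoment} controlling the second. Your writeup merely makes explicit two steps the paper leaves implicit (the exchangeability of the coupled pairs, and the domination of $\var(\varphi(\overline{X}_t^1))$ by the second moment via the Lipschitz property).
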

In particular, from Markov's inequality, we have for $\epsilon >0$ 
\begin{align}
\label{eq:DeVinequality}
\sup_{t\geq 0} \sup_{\Vert \varphi \Vert_{Lip} \leq 1}\mathbb{P}\left( \Big \vert \dfrac{1}{N} \sum_{i=1}^{N} \varphi(X_t^{i,N}) - \int \varphi du_t \Big \vert >\epsilon \right) \leq \dfrac{C}{\epsilon \sqrt{N}}.
\end{align}

\bigskip

One can expect to improve this bound and obtain an exponential one. An exponential bound has been obtained by Malrieu \cite{Malrieu01} for interacting diffusions: as said previously, under some assumptions on the convexity of potentials, Malrieu showed that the law of $(X_t^{N})_{t\geq 0}$ satisfies a logarithmic Sobolev inequality with a constant independent of $t$ and $N$. As a consequence, via the Herbst's argument, the law of the system satisfies a Gaussian concentration inequality around its mean. Our particle system does not verify a Sobolev inequality, this is the difference with interacting diffusions and that is the difficulty. 
Under an additional assumption on the number of particles $N$, Bollet, Guillin and Villani \cite{BGV07} improve the deviation inequality of Malrieu and obtain an exponential bound of $\displaystyle \mathbb{P}(\sup_{0\leq t\leq T} \mathcal{W}(\mu_t^{N}, u_t)>\epsilon)$ for every $\epsilon>0$ and $T\geq 0$.
A Poisson type deviation bound has been established by Joulin \cite{Joulin09} for the empirical measure of birth and death processes with unbounded generator. With the distance that we introduced (namely the $l_1$-distance), we can not apply directly his results. Indeed, one of the hypotheses is the existence of a constant $V$ such that
$$
\Vert \sum_y d^{2}(\cdot,y)Q(\cdot,y) \Vert_{\infty} \leq V^{2},
$$ 
where $Q$ is the transition rates matrix and $d$ a metric on $\mathbb{N}$. However, if we consider the $l_1$-distance, $V$ is infinite. So, to apply Joulin's results, we need to choose another distance in such a way that the previous assumption is satisfied.
\bigskip

Although we are not able to provide an exponential bound of \eqref{eq:DeVinequality}, we can measure how the empirical measure $\mu_t^{N}$ is close to the law $\mu_t$ in a stronger way.

\begin{coro}[Deviation inequality]
\label{cor:devInequality}
Under the assumptions of Theorem \ref{th:PropChaos}, and if $\lambda-2\alpha>0$, there exists a constant $\overline{C}>0$ such that
$$
\sup_{t\geq 0} \mathbb{P}\left( \mathcal{W}(\mu_t^{N}, \mu_t) >\epsilon \right) \leq \dfrac{\overline{C}}{\epsilon \sqrt{N}}.
$$
\end{coro}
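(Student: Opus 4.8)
The plan is to reduce the deviation inequality to the uniform propagation of chaos of Theorem \ref{th:PropChaos} by a single application of Markov's inequality, after splitting the Wasserstein distance with the triangle inequality. Throughout, $u_t$ denotes the common law of the independent nonlinear processes $\overline{X}^i_t$ (the measure against which $\mu^N_t$ is compared), and I write $\overline{\mu}_t^N = \frac1N\sum_{i=1}^N \delta_{\overline{X}_t^i}$ for their empirical measure. First, Markov's inequality gives
$$
\mathbb{P}\left(\mathcal{W}(\mu_t^N, u_t) > \epsilon\right) \le \frac1\epsilon\, \mathbb{E}\left[\mathcal{W}(\mu_t^N, u_t)\right],
$$
so it suffices to prove the uniform-in-time bound $\sup_{t\ge0}\mathbb{E}[\mathcal{W}(\mu_t^N,u_t)] \le \overline C/\sqrt N$. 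By the triangle inequality for $\mathcal{W}$,
$$
\mathbb{E}\left[\mathcal{W}(\mu_t^N, u_t)\right] \le \mathbb{E}\left[\mathcal{W}(\mu_t^N, \overline{\mu}_t^N)\right] + \mathbb{E}\left[\mathcal{W}(\overline{\mu}_t^N, u_t)\right],
$$
and I would bound the two terms separately.

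For the first term I would use the very coupling produced by Theorem \ref{th:PropChaos}. For each realization the random measure $\frac1N\sum_i \delta_{(X_t^{i,N}, \overline{X}_t^i)}$ is an admissible coupling of $\mu_t^N$ and $\overline{\mu}_t^N$, so that $\mathcal{W}(\mu_t^N, \overline{\mu}_t^N) \le \frac1N\sum_{i=1}^N |X_t^{i,N} - \overline{X}_t^i|$; taking expectations and invoking exchangeability together with \eqref{eq:chaos} yields $\mathbb{E}[\mathcal{W}(\mu_t^N, \overline{\mu}_t^N)] \le \mathbb{E}|X_t^{1,N} - \overline{X}_t^1| \le K/\sqrt N$, uniformly in $t$.

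The second term is the classical rate of convergence of the empirical measure of $N$ i.i.d. samples from $u_t$. Since the state space is $\mathbb{N}\subset\mathbb{R}$, I would use the one-dimensional representation $\mathcal{W}(\overline{\mu}_t^N, u_t) = \sum_{k\ge0} |F_t^N(k)-F_t(k)|$, where $F_t^N$ and $F_t$ are the cumulative distribution functions of $\overline{\mu}_t^N$ and $u_t$. For fixed $k$, $NF_t^N(k)$ is a sum of i.i.d. Bernoulli variables of parameter $F_t(k)$, hence by Cauchy--Schwarz $\mathbb{E}|F_t^N(k)-F_t(k)| \le \sqrt{F_t(k)(1-F_t(k))/N}$. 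Summing over $k$ and using $F_t(k)\le 1$ gives $\mathbb{E}[\mathcal{W}(\overline{\mu}_t^N, u_t)] \le \frac1{\sqrt N}\sum_{k\ge0}\sqrt{\mathbb{P}(\overline{X}_t^1 > k)}$, and this series is finite and bounded uniformly in $t$ thanks to the uniform exponential moment of Lemma \ref{lem:expoMoment}: a bound $\sup_t\mathbb{E}[e^{\theta \overline{X}_t^1}]<\infty$ forces $\mathbb{P}(\overline{X}_t^1>k)\le Ce^{-\theta k}$, whence $\sum_k\sqrt{\mathbb{P}(\overline{X}_t^1>k)} \le C'\sum_k e^{-\theta k/2}<\infty$. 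Combining the two estimates yields the uniform bound on $\mathbb{E}[\mathcal{W}(\mu_t^N,u_t)]$ and, with Markov, the corollary.

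The main obstacle is the uniformity in time of the second term. The $1/\sqrt N$ rate for the empirical measure of i.i.d. samples in $\mathcal{W}_1$ on the line is standard, but the constant involves $\sum_k\sqrt{\mathbb{P}(\overline{X}_t^1>k)}$, which a priori could grow with $t$ through the spreading of $u_t$; controlling it uniformly in $t$ is exactly the role of the exponential moment estimate of Lemma \ref{lem:expoMoment}. The first term, by contrast, is handled cleanly and with an $N$-independent rate by the uniform propagation of chaos already established in Theorem \ref{th:PropChaos}.
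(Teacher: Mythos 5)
Your proof is correct, and it follows the paper's decomposition exactly: triangle inequality through the empirical measure $\nu_t^N$ (your $\overline{\mu}_t^N$) of the $N$ independent nonlinear processes, the coupling bound $\mathcal{W}(\mu_t^N,\nu_t^N)\le \frac1N\sum_i|X_t^{i,N}-\overline{X}_t^i|$ controlled by Theorem \ref{th:PropChaos}, and Markov's inequality to convert the expectation bound into the deviation bound (whether Markov is applied first or last is immaterial). The only genuine difference is the treatment of the i.i.d.\ term $\mathbb{E}[\mathcal{W}(\nu_t^N,u_t)]$: the paper disposes of it by citing Theorem 1 of Fournier--Guillin \cite{FG15} with $p=d=1$, $q>2$, the required moment condition being supplied by Lemma \ref{lem:expoMoment}, whereas you prove the $1/\sqrt N$ rate from scratch using the one-dimensional representation $\mathcal{W}(\nu_t^N,u_t)=\sum_{k\ge0}|F_t^N(k)-F_t(k)|$, the Bernoulli variance bound $\mathbb{E}|F_t^N(k)-F_t(k)|\le\sqrt{F_t(k)(1-F_t(k))/N}$, and the uniform exponential tail $\mathbb{P}(\overline{X}_t^1>k)\le Ce^{-\delta k}$ from Lemma \ref{lem:expoMoment} to sum the series uniformly in $t$. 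Your route is more elementary and self-contained, and it makes transparent exactly where the exponential moment enters (to keep $\sum_k\sqrt{\mathbb{P}(\overline{X}_t^1>k)}$ bounded uniformly in time); its only cost is that it is tied to the one-dimensional state space, while the citation route is shorter and would survive in higher-dimensional or more general settings where the CDF identity for $\mathcal{W}_1$ is unavailable. Both arguments use Lemma \ref{lem:expoMoment} for the time-uniformity, and both yield the same $\overline{C}/(\epsilon\sqrt N)$ conclusion.
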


\bigskip

\subsection*{Long time behavior of the nonlinear process}
The long time behavior of $u_t$ is a consequence of Theorems \ref{th:coupling} and \ref{th:PropChaos}. We express the convergence of $u_t$ to equilibrium with an explicit rate, under the Wasserstein distance $\mathcal{W}$.
\begin{theo}[Long time behavior]
\label{th:LTBnonlinear}
Let us assume that the assumptions of Theorem \ref{th:PropChaos} hold. Let $(u_t)_{t\geq 0}$ and $(v_t)_{t\geq 0}$ be the solutions of \eqref{eq:nonlinearMasterEq} with initial conditions $u_0$ and $v_0$ respectively. Then, under the assumption $\lambda-2\alpha>0$
\begin{equation}
\label{eq:CauchyW1}
\mathcal{W}(u_t, v_t) \leq e^{-(\lambda- 2\alpha) t} \mathcal{W}(u_0, v_0).
\end{equation}
In particular, the nonlinear process $(\overline{X}_t)_{t\geq 0}$ associated with the equation \eqref{eq:nonlinearMasterEq} has a unique invariant measure $u_{\infty}$ and
\begin{equation}
\label{eq:CvgSolNonLinear}
\mathcal{W}(u_t, u_{\infty}) \leq e^{-(\lambda-2\alpha)t}\mathcal{W}(u_0, u_{\infty}).
\end{equation}
\end{theo}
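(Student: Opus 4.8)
The plan is to obtain the nonlinear contraction \eqref{eq:CauchyW1} as the $N\to\infty$ limit of the linear contraction of Theorem \ref{th:coupling}, using the uniform propagation of chaos of Theorem \ref{th:PropChaos} to pass from the particle marginals to $u_t$ and $v_t$. Concretely, I would build on one probability space two particle systems $(X_t^N)_{t\ge0}$ and $(Y_t^N)_{t\ge0}$, both generated by \eqref{eq:generator}, whose coordinates are i.i.d. at time $0$ with $X_0^{i,N}\sim u_0$ and $Y_0^{i,N}\sim v_0$, the pair $(X_0^{i,N},Y_0^{i,N})$ being an optimal coupling of $u_0$ and $v_0$ in each coordinate and independent across coordinates. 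With this choice $\E[d(X_0^N,Y_0^N)]=N\,\mathcal{W}(u_0,v_0)$, so that $\mathcal{W}(\mathrm{Law}(X_0^N),\mathrm{Law}(Y_0^N))\le N\,\mathcal{W}(u_0,v_0)$.

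Next I would descend to single-particle marginals. Since the initial data are i.i.d. and the dynamics \eqref{eq:generator} are symmetric, the laws of $X_t^N$ and $Y_t^N$ are exchangeable, and an optimal coupling realizing $\mathcal{W}(\mathrm{Law}(X_t^N),\mathrm{Law}(Y_t^N))$ may be chosen exchangeable as well; each coordinate then contributes equally, whence
\[
\mathcal{W}\big(\mathrm{Law}(X_t^{1,N}),\mathrm{Law}(Y_t^{1,N})\big)\le \frac1N\,\mathcal{W}\big(\mathrm{Law}(X_t^N),\mathrm{Law}(Y_t^N)\big).
\]
Combining this with Theorem \ref{th:coupling} and the initial bound above gives
\[
\mathcal{W}\big(\mathrm{Law}(X_t^{1,N}),\mathrm{Law}(Y_t^{1,N})\big)\le e^{-(\lambda-2\alpha)t}\,\mathcal{W}(u_0,v_0).
\]
I would then apply the triangle inequality for $\mathcal{W}$ on $\mathbb{N}$, inserting the two particle marginals:
\[
\mathcal{W}(u_t,v_t)\le \mathcal{W}\big(u_t,\mathrm{Law}(X_t^{1,N})\big)+\mathcal{W}\big(\mathrm{Law}(X_t^{1,N}),\mathrm{Law}(Y_t^{1,N})\big)+\mathcal{W}\big(\mathrm{Law}(Y_t^{1,N}),v_t\big).
\]
By Theorem \ref{th:PropChaos} applied to each system, the first and third terms are at most $K/\sqrt N$ (here using that $u_t$ and $v_t$ are exactly the laws of the nonlinear processes $\overline{X}_t^1$ driven by $u$ and $v$), while the middle term is bounded as above. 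Letting $N\to\infty$ yields \eqref{eq:CauchyW1}.

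For the ``in particular'' part, uniqueness is immediate: two invariant measures $u_\infty,u_\infty'$ satisfy $\mathcal{W}(u_\infty,u_\infty')\le e^{-(\lambda-2\alpha)t}\mathcal{W}(u_\infty,u_\infty')$ for all $t$, forcing equality. For existence I would show that $(u_t)_{t\ge0}$ is Cauchy in $\mathcal{W}$: writing $\Phi_s$ for the nonlinear flow and using \eqref{eq:CauchyW1} as the contraction estimate for $\Phi_s$, for $h>0$ one gets $\mathcal{W}(u_{t+h},u_t)=\mathcal{W}(\Phi_t(u_h),\Phi_t(u_0))\le e^{-(\lambda-2\alpha)t}\mathcal{W}(u_h,u_0)\le e^{-(\lambda-2\alpha)t}\big(\Vert u_h\Vert+\Vert u_0\Vert\big)$, which tends to $0$ uniformly in $h$ provided the first moments $\Vert u_t\Vert$ stay bounded. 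This uniform moment bound is exactly what the hypotheses of Lemma \ref{lem:expoMoment} supply, and it is the step I expect to be the crux: it is what guarantees that the flow remains in the complete metric space of probability measures on $\mathbb{N}$ with finite first moment (on which $\mathcal{W}_1$ is a genuine complete distance), so that the Cauchy family has a limit $u_\infty$. Continuity of $\Phi_s$ in $\mathcal{W}$ (again from \eqref{eq:CauchyW1}) then gives $\Phi_s(u_\infty)=u_\infty$, i.e. invariance, and \eqref{eq:CvgSolNonLinear} follows by taking $v_0=u_\infty$ in \eqref{eq:CauchyW1}.
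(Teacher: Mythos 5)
Your proposal is correct and follows essentially the same route as the paper: both proofs sandwich the single-particle marginals of two particle systems (started i.i.d.\ from $u_0$ and $v_0$) between $u_t$ and $v_t$ via the triangle inequality, bound the outer terms by $K/\sqrt N$ using Theorem \ref{th:PropChaos}, bound the middle term by $e^{-(\lambda-2\alpha)t}\mathcal{W}(u_0,v_0)$ using exchangeability together with Theorem \ref{th:coupling}, and let $N\to\infty$. Your treatment of the ``in particular'' part (uniqueness by contraction, the Cauchy estimate via the flow property and the uniform moment bound, completeness of the $\mathcal{W}_1$-space, and invariance of the limit) supplies details that the paper compresses into the single assertion that $(u_t)_{t\geq 0}$ is Cauchy and hence converges.
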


\bigskip

The exponential convergence to equilibrium of the nonlinear process has been obtained by Malrieu \cite{Malrieu01, Malrieu03} for interacting diffusions under the Wasserstein distance $\mathcal{W}_2$. To prove this convergence, he used the uniform propagation of chaos, the exponential convergence to equilibrium of the particle system and the Talagrand transport inequality $T_2$ (connecting the Wasserstein distance and the relative entropy). Later, Cattiaux, Guillin and Malrieu \cite{CGM10} complete his result by giving the distance between two solutions of the McKean-Vlasov equation (granular media equation) starting at different points. %Initially, this approach of contraction in $L^{2}$ Wasserstein distance has been investigated by Carrillo, McCann and Villani \cite{CMCV06}.

\bigskip

Finally, from Corollary \ref{cor:ConvEmpirique} and Theorem \ref{th:LTBnonlinear}, we have the convergence of the empirical measure under the invariant distribution that we denote by $\mu_{\infty}^{N}$.

\begin{coro}[Convergence under the invariant distribution] Assume that the assumptions of Corollary \ref{cor:ConvEmpirique} are satisfied. Assume moreover that $\lambda-2\alpha>0$. Then 
$$
\sup_{\Vert \varphi \Vert_{\infty} \leq 1} \E[\vert \mu_{\infty}^{N}(\varphi) - u_{\infty}(\varphi) \vert] \leq \dfrac{C}{\sqrt{N}}.
$$
\end{coro}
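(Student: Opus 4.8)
The plan is to deduce the bounded-test-function estimate from the Lipschitz estimate already available, exploiting the discreteness of the state space. The crucial elementary observation is that on $\mathbb{N}$ distinct points are at distance at least $1$, so every $\varphi$ with $\Vert \varphi \Vert_{\infty} \le 1$ is automatically Lipschitz: for $x\neq y$ one has $|\varphi(x)-\varphi(y)|\le 2\le 2|x-y|$, hence $\Vert \varphi \Vert_{Lip}\le 2$. Since $\varphi\mapsto \mu(\varphi)-\nu(\varphi)$ is linear, writing $\varphi=2\psi$ with $\Vert \psi \Vert_{Lip}\le 1$ gives $\sup_{\Vert\varphi\Vert_{\infty}\le 1}\E|\mu_{\infty}^{N}(\varphi)-u_{\infty}(\varphi)|\le 2\sup_{\Vert\psi\Vert_{Lip}\le 1}\E|\mu_{\infty}^{N}(\psi)-u_{\infty}(\psi)|$, so it suffices to prove the bound for $\Vert\varphi\Vert_{Lip}\le 1$. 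This is precisely the regime in which Corollary \ref{cor:ConvEmpirique} and the Wasserstein contraction of Theorem \ref{th:LTBnonlinear} can be brought to bear.

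Fix such a $\varphi$ and run the particle system $(X_t^{N})_{t\ge 0}$ from an i.i.d.\ initial condition of law $u_0$, so that the hypotheses of Corollary \ref{cor:ConvEmpirique} are met. For every $t\ge 0$ the triangle inequality gives
\begin{equation*}
\E|\mu_t^{N}(\varphi)-u_{\infty}(\varphi)| \le \E|\mu_t^{N}(\varphi)-u_t(\varphi)| + |u_t(\varphi)-u_{\infty}(\varphi)|.
\end{equation*}
The first term is at most $C/\sqrt{N}$ \emph{uniformly in} $t$ by Corollary \ref{cor:ConvEmpirique}. For the second, Kantorovich--Rubinstein duality for the $l^{1}$-Wasserstein distance yields $|u_t(\varphi)-u_{\infty}(\varphi)|\le \Vert\varphi\Vert_{Lip}\,\mathcal{W}(u_t,u_{\infty})\le \mathcal{W}(u_t,u_{\infty})$, which by Theorem \ref{th:LTBnonlinear} is dominated by $e^{-(\lambda-2\alpha)t}\mathcal{W}(u_0,u_{\infty})$ and hence vanishes as $t\to\infty$.

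It remains to link $\mu_t^{N}$ to the stationary empirical measure $\mu_{\infty}^{N}$. Introduce the functional $\Phi(x)=|\tfrac{1}{N}\sum_{i=1}^{N}\varphi(x_i)-u_{\infty}(\varphi)|$ on $\mathbb{N}^{N}$; since $|\Phi(x)-\Phi(\bar x)|\le \tfrac{1}{N}\sum_i|\varphi(x_i)-\varphi(\bar x_i)|\le \tfrac{\Vert\varphi\Vert_{Lip}}{N}\,d(x,\bar x)$, the map $\Phi$ is $\tfrac{1}{N}$-Lipschitz for $d$. Writing $X_{\infty}^{N}\sim\lambda_{N}$ so that $\E|\mu_{\infty}^{N}(\varphi)-u_{\infty}(\varphi)|=\E\,\Phi(X_{\infty}^{N})$, Kantorovich--Rubinstein duality on $(\mathbb{N}^{N},d)$ together with Theorem \ref{th:coupling} gives
\begin{equation*}
\big|\E\,\Phi(X_t^{N})-\E|\mu_{\infty}^{N}(\varphi)-u_{\infty}(\varphi)|\big| \le \tfrac{1}{N}\,\mathcal{W}(\mathrm{Law}(X_t^{N}),\lambda_{N}) \le \tfrac{1}{N}\,e^{-(\lambda-2\alpha)t}\,\mathcal{W}(\mathrm{Law}(X_0^{N}),\lambda_{N}),
\end{equation*}
the right-hand side being finite by the moment control of Lemma \ref{lem:expoMoment} and vanishing as $t\to\infty$. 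Combining the three displays and letting $t\to\infty$, every $t$-dependent quantity disappears and $\E|\mu_{\infty}^{N}(\varphi)-u_{\infty}(\varphi)|\le C/\sqrt{N}$ for all $\varphi$ with $\Vert\varphi\Vert_{Lip}\le 1$; the factor-$2$ reduction then yields the claim with constant $2C$.

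The only genuinely delicate point is the reduction of the first paragraph: the statement concerns bounded test functions, a total-variation-type control that on a continuous space would be strictly stronger than the Wasserstein-type control furnished by Corollary \ref{cor:ConvEmpirique} and Theorem \ref{th:LTBnonlinear}. What rescues the argument is the discreteness of $\mathbb{N}$, which collapses the two scales of control into one up to the harmless factor $2$. The remaining care lies in the uniformity in $t$ of Corollary \ref{cor:ConvEmpirique}, without which the $1/\sqrt{N}$ rate would not survive the limit $t\to\infty$, and in transferring the estimate from the i.i.d.-started system, where propagation of chaos is available, to the stationary empirical measure through the contraction of Theorem \ref{th:coupling}.
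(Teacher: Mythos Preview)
Your argument is correct and follows the route the paper indicates (the paper gives no detailed proof, only the remark that the corollary is a consequence of Corollary~\ref{cor:ConvEmpirique} and Theorem~\ref{th:LTBnonlinear}). You supply the missing ingredients: the reduction from $\Vert\varphi\Vert_{\infty}\le 1$ to $\Vert\varphi\Vert_{Lip}\le 1$ via the discreteness of $\mathbb{N}$, and the transfer from the i.i.d.-started system to the stationary one through Theorem~\ref{th:coupling}. Both are exactly what is needed and neither is addressed explicitly in the paper.

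One minor point: the finiteness of $\mathcal{W}(\mathrm{Law}(X_0^{N}),\lambda_{N})$ does not follow from Lemma~\ref{lem:expoMoment}, which controls moments of the \emph{nonlinear} process $\overline{X}$. What you need is a first-moment bound for the invariant measure $\lambda_{N}$ of the particle system; this is provided by the Lyapunov inequality of Theorem~\ref{th:lyapunov} in the appendix (integrating $\mathcal{L}V\le -\kappa V + b_0 N$ against $\lambda_{N}$ gives $\int V\,d\lambda_{N}\le b_0 N/\kappa$). With that correction the justification is complete.
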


\bigskip

The remainder of the paper is as follows. Section \ref{sect:proofThmCoupling} gives the proof of Theorem \ref{th:coupling}, Section \ref{sect:proofProgChaos} the proofs of the propagation of chaos phenomenon (Theorem \ref{th:PropChaos}) and Corollaries \ref{cor:ConvEmpirique} and \ref{cor:devInequality}. Finally, Section \ref{sect:proofNonlinear} is devoted to the proof of Theorem \ref{th:LTBnonlinear}. We conclude the paper with Section \ref{sect:annexe}, where we state the non-explosion and the positive recurrence of the interacting particle system.

\section{Proof of Theorem \ref{th:coupling}}
\label{sect:proofThmCoupling}
First of all, there exists a Lyapunov function for the process $(X_t^{N})_{t\geq 0}$ which ensures its non-explosion and positive recurrence (see Appendix). Combining with
the irreducibility of the process $(X_t^{N})_{t\geq 0}$, the Foster-Lyapunov criteria ensures its ergodicity and even its exponential ergodicity, see \cite{MT93}. 
Before giving the demonstration of Theorem \ref{th:coupling} we give an example where Assumptions $(A)$ and $(B)$ are satistied.

\begin{exe}[Transition rates example]
\label{ex:example}
Let $p<q$ two positive constants and $a\geq 1$. For $k\in \mathbb{N}$ and $l\in \mathbb{R}_+$ let 
$$
b_k=pk^{a},\ d_k=qk^{a},\ q^{+}(k,l)=(l-k)_+ \  \text{and} \ q^{-}(k,l)=(k-l)_+,
$$
where for $x\in \mathbb{R}$, $x_+ = \max(x,0)$. The interaction functions $q^{+}$ and $q^{-}$ mean that the more the particles are far from their mean, the more they tend to come closer to it. Then, the transition rates satisfy the assumptions $(A)$ and $(B)$ with $\lambda = q-p$ and $\alpha=2$. We have equality in assumption $(A)$ for the $M/M/\infty$ queue $b_k=p$ and $d_k=qk$ or for the linear case $b_k=pk$ and $d_k=qk$ for $k\in \mathbb{N}$.
\end{exe}

\begin{Rq}[Caputo-Dai Pra-Posta results]
If we consider Assumption $(A)$ and the following assumptions $\nabla^{+}b \leq 0$ and $\nabla^{+}d\geq 0$, Caputo, Dai Pra and Posta \cite{CDPP} obtain estimates for the rate of exponential convergence to equilibrium of the birth and death process without interaction ($q^{+}= q^{-} \equiv 0$), in the relative entropy sense. Moreover, this exponential decay of relative entropy is convex in time. To do this, the authors control the second derivative of the relative entropy and show that its convexity leads to a modified logarithmic Sobolev inequality. Nevertheless, one of the key points of their approach is a condition of reversibility. In our case, the reversibility is not assumed and their results do not hold.
\end{Rq}

%\begin{Rq}[Pure drift birth and death process]
%The pure birth and death process $b_i=p$ and $d_i=q$ for all $i\in \mathbb{N}^{*}$, where $p,q$ are two positive constants does not satisfy Assumption $(A)$. So, we cannot apply Theorem \ref{th:coupling}. A Lyapunov type argument shows that when $p<q$ the pure drift birth and death process is ergodic (see Theorem \ref{th:lyapunov}).
%\end{Rq}

\begin{proof}[Proof of Theorem \ref{th:coupling}]
We build a coupling between two particle systems generated by \eqref{eq:generator}, $X=(X^{1}, \dots, X^{N}) \in \mathbb{N}^{N}$ and $Y=(Y^{1}, \dots, Y^{N})\in \mathbb{N}^{N}$, starting respectively from some random configurations $X_0, Y_0 \in \mathbb{N}^{N}$. The coupling that we introduce is the same as \cite{DTZ05} or \cite{FZ92}. Let $\mathbb{L} = \mathbb{L}_{1} + \mathbb{L}_{2}$ be the generator of the coupling defined by
\begin{align*}
\mathbb{L}_{1} f(X,Y)
&= \sum_{i=1}^{N} \left(b_{X^{i}} \wedge b_{Y^{i}}\right) \left( f(X+e_i, Y+e_i) -f(X,Y)\right)\\
&+ \sum_{i=1}^{N} \left(d_{X^{i}} \wedge d_{Y^{i}}\right)\left( f(X-e_i, Y-e_i) -f(X,Y)\right)\\
&+ \sum_{i=1}^{N} \left(b_{X^{i}} - b_{Y^{i}}\right)_{+} \left( f(X+e_i, Y) -f(X,Y)\right)\\
&+ \sum_{i=1}^{N} \left(b_{Y^{i}} - b_{X^{i}}\right)_{+} \left( f(X, Y+e_i) -f(X,Y)\right)\\
&+ \sum_{i=1}^{N} \left(d_{Y^{i}} - d_{X^{i}}\right)_{+}\left( f(X, Y-e_i) -f(X,Y)\right)\\
&+ \sum_{i=1}^{N} \left(d_{X^{i}} - d_{Y^{i}}\right)_{+}\left(f(X-e_i,Y) -f(X,Y)\right)\\
\end{align*}

and

\begin{align*}
\mathbb{L}_{2} f(X,Y)
&= \sum_{i=1}^{N} \left(q^{+}(X^{i}, M^{N,1}) \wedge q^{+}(Y^{i}, M^{N,2})\right) \left( f(X+e_i, Y+e_i) -f(X,Y)\right)\\
&+ \sum_{i=1}^{N} \left(q^{+}(Y^{i}, M^{N,2}) - q^{+}(X^{i}, M^{N,1})\right)_{+}\left( f(X, Y+e_i) -f(X,Y)\right)\\
&+ \sum_{i=1}^{N} \left(q^{+}(X^{i}, M^{N,1}) - q^{+}(Y^{i}, M^{N,2})\right)_{+}\left( f(X+e_i, Y) -f(X,Y)\right)\\
&+ \sum_{i=1}^{N} \left(q^{-}(X^{i}, M^{N,1}) \wedge q^{-}(Y^{i}, M^{N,2})\right) \left( f(X-e_i, Y-e_i) -f(X,Y)\right)\\
&+ \sum_{i=1}^{N} \left(q^{-}(Y^{i}, M^{N,2}) - q^{-}(X^{i}, M^{N,1})\right)_{+}\left( f(X, Y-e_i) -f(X,Y)\right)\\
&+ \sum_{i=1}^{N} \left(q^{-}(X^{i}, M^{N,1}) - q^{-}(Y^{i}, M^{N,2})\right)_{+}\left( f(X-e_i, Y) -f(X,Y)\right),\\
\end{align*}
where $M^{N,1}$ (resp. $M^{N,2}$) represents the mean of the particle system $X$ (resp. $Y$).
We can easily verify that if a measurable function $f$ on $\mathbb{N}^{N}\times \mathbb{N}^{N}$ does not depend on its second (resp. first) variable; that is, with a slight abuse of notation:
$$
\forall X,Y\in \mathbb{N}^{N}, \ f(X,Y)=f(X) \ (\text{resp. } \ f(X,Y)=f(Y)),
$$
then $\mathbb{L} f(X,Y)= \mathcal{L} f(X)$ (resp. $\mathbb{L} f(X,Y)= \mathcal{L} f(Y)$), where $\mathcal{L}$ is defined in \eqref{eq:generator}. This property ensures that the couple $(X_t,Y_t)_{t\geq 0}$ generated by $\mathbb{L}$  is a well-defined coupling of processes generated by $\mathcal{L}$. 
Applying the generator $\mathbb{L}$ to the distance $d$ defined in \eqref{eq: distance}, we obtain, on the one hand
$$
\mathbb{L}_{1} d(X,Y) = \sum_{i=1}^{N} K_i,
$$
where for all $i \in \{1, \dots, N\}$
\begin{align*}
K_i
&= \left(b_{X^{i}} - b_{Y^{i}}\right)_{+} \left( \vert X^{i} +1 - Y^{i} \vert - \vert X^{i} - Y^{i} \vert\right)\\
&+ \left(b_{Y^{i}} - b_{X^{i}}\right)_{+} \left( \vert X^{i} - Y^{i} -1 \vert - \vert X^{i} - Y^{i} \vert\right)\\
&+ \left(d_{Y^{i}} - d_{X^{i}}\right)_{+}\left( \vert X^{i} - Y^{i} +1 \vert - \vert X^{i} - Y^{i} \vert\right)\\
&+ \left(d_{X^{i}} - d_{Y^{i}}\right)_{+}\left( \vert X^{i} -1 - Y^{i} \vert - \vert X^{i} - Y^{i} \vert\right).\\
\end{align*}

Under Assumption (A) and using the fact that for all $x,y\geq 0$, $(x-y)_{+} - (y-x)_{+} = x-y$, there exists $\lambda>0$ such that for $i \in \{1, \dots, N\}$

\begin{align*}
K_i
&= \left(b_{X^{i}} - b_{Y^{i}} + d_{Y^{i}} - d_{X^{i}}\right)\mathds{1}_{X^{i} > Y^{i}}+ \left(b_{Y^{i}} - b_{X^{i}} + d_{X^{i}} - d_{Y^{i}}\right)\mathds{1}_{X^{i} < Y^{i}}\\
&\leq -\lambda \left( (X^{i} - Y^{i} ) \mathds{1}_{X^{i} > Y^{i}} + (Y^{i} - X^{i}) \mathds{1}_{X^{i} <Y^{i}} \right)\\
%&\leq -\lambda \left( \vert X^{i} - \overline{X}^{i} \vert \mathds{1}_{X^{i} > \overline{X}^{i}} + \vert X^{i} - \overline{X}^{i} \vert \mathds{1}_{X^{i} <\overline{X}^{i}}  + \vert X^{i} - \overline{X}^{i} \vert \mathds{1}_{X^{i}=\overline{X}^{i}} \right)\\
&=-\lambda  \vert X^{i} - Y^{i} \vert.  
\end{align*}

Thus,
\begin{equation*}
\label{eq: genCouplingBD}
\mathbb{L}_{1} d(X,Y) \leq -\lambda  d(X,Y).
\end{equation*}

On the other hand,
$$
\mathbb{L}_{2} d(X,Y) = \sum_{i=1}^{N} H_i,
$$
where for all $i \in \{1, \dots, N\}$
\begin{align*}
H_i
&=  \left(q^{+}(X^{i}, M^{N,1}) - q^{+}(Y^{i}, M^{N,2})\right)_{+}\left( \vert X^{i} +1 - Y^{i} \vert - \vert X^{i} - Y^{i} \vert\right)\\
&+ \left(q^{+}(Y^{i}, M^{N,2}) - q^{+}(X^{i}, M^{N,1})\right)_{+}\left( \vert X^{i} - Y^{i} -1 \vert - \vert X^{i} - Y^{i} \vert\right)\\
&+  \left(q^{-}(X^{i}, M^{N,1}) - q^{-}(Y^{i}, M^{N,2})\right)_{+}\left( \vert X^{i} - Y^{i} -1 \vert - \vert X^{i} - Y^{i} \vert\right)\\
&+ \left(q^{-}(Y^{i}, M^{N,2}) - q^{-}(X^{i}, M^{N,1})\right)_{+}\left( \vert X^{i} - Y^{i} +1 \vert - \vert X^{i} - Y^{i} \vert\right).\\
\end{align*}

Using the fact that for all $x,y\geq 0$, $(x-y)_{+} + (y-x)_{+} = \vert x-y \vert$, we have,
\begin{align*}
H_i
&= \left((q^{+}-q^{-})(X^{i}, M^{N,1}) - (q^{+}-q^{-})(Y^{i}, M^{N,2})\right)\mathds{1}_{X^{i} > Y^{i}}\\
&+ \left((q^{+}-q^{-})(Y^{i}, M^{N,2}) - (q^{+}-q^{-})(X^{i}, M^{N,1})\right)\mathds{1}_{X^{i} < Y^{i}}\\
&+ \Big( \vert q^{+}(X^{i}, M^{N,1}) - q^{+}(Y^{i}, M^{N,2}) \vert + \vert q^{-}(X^{i}, M^{N,1}) - q^{-}(Y^{i}, M^{N,2})\vert \Big) \mathds{1}_{X^{i} = Y^{i}}.
\end{align*}

Under Assumption $(B)$, the growth of $q^{+}$ and the decrease of $q^{-}$ on the second component imply, for $i\in \{1, \dots, N\}$ such that $X^{i}=Y^{i}$
\begin{align*}
J_i 
&:= \vert q^{+}(X^{i}, M^{N,1}) - q^{+}(X^{i}, M^{N,2}) \vert + \vert q^{-}(X^{i}, M^{N,1}) - q^{-}(X^{i}, M^{N,2})\vert\\
&= \left((q^{+}-q^{-})(X^{i}, M^{N,2}) - (q^{+}-q^{-})(X^{i}, M^{N,1})\right)\mathds{1}_{X^{i} = Y^{i}} \mathds{1}_{M^{N,1} < M^{N,2}}\\
&+ \left((q^{+}-q^{-})(X^{i}, M^{N,1}) - (q^{+}-q^{-})(X^{i}, M^{N,2})\right)\mathds{1}_{X^{i} = Y^{i}} \mathds{1}_{M^{N,1} > M^{N,2}}.
\end{align*} 

We deduce that, under Assumption (B), there exists $\alpha>0$ such that for $i \in \{1, \dots, N\}$
\begin{align*}
H_i
&\leq \alpha \left(\vert X^{i}-Y^{i}\vert + \vert M^{N,1}-M^{N,2}\vert \right),
\end{align*}

and thus the definition of $M^{N,l}, l=1,2$ implies that
\begin{align*}
\mathbb{L}_2 d(X,Y) 
&\leq 2\alpha d(X,Y). 
\end{align*}

\bigskip

We deduce that $\mathbb{L} d(X,Y) \leq -(\lambda-2\alpha) d(X,Y).$ Now let $(\mathbb{P}_{t})_{t \geq 0}$ be the semi-group associated with the generator $\mathbb{L}$. Using the equality $\partial_{t} \mathbb{P}_{t}f = \mathbb{P}_{t}\mathbb{L}f$ and Gronwall's Lemma, we have, for every $t\geq 0$, $\mathbb{P}_t d \leq e^{- (\lambda-2\alpha) t} d$; namely
$$
\E[d(X_t,Y_t)]
\leq e^{-(\lambda-2\alpha) t} \E[d(X_0, Y_0)].
$$
Taking the infimum over all couples $(X_{0},Y_{0})$, the claim follows.
\end{proof}

%\bigskip

%\begin{Rq}[A new condition on the interaction rates]If we replace the Lipschitz condition in Assumption $(B)$ to the following one:
%There exist $\alpha, \zeta >0$ such that for any $(k_1,l_1), (k_2, l_2) \in \mathbb{N}\times \mathbb{R}_+$, $k_1 \geq k_2$
%\begin{equation}
%\label{eq:convexCondition}
%(q^{+}-q^{-})(k_1,l_1)-(q^{+}-q^{-})(k_2,l_2) \leq -\alpha \left(k_1-k_2\right) + \zeta \left(l_1-l_2\right).
%\end{equation}

%Then, under this condition and Assumption $(A)$, we have for any processes $(X_t^{N})_{t\geq 0}$ and $(Y_t^{N})_{t\geq 0}$ generated by \eqref{eq:generator} and for any $t\geq 0$
%\begin{equation*}
%\mathcal{W}(\mathrm{Law}(X_t^{N}), \mathrm{Law}(Y_t^{N})) \leq e^{-\left((\lambda+\alpha)- \zeta\right) t} \mathcal{W}(\mathrm{Law}(X_0^{N}), \mathrm{Law}(Y_0^{N})).
%\end{equation*} 

%If $l_1=l_2$, the condition \eqref{eq:convexCondition}is a convexity condition on the first variable. For $l_1 \neq l_2$, the term $\zeta (l_1-l_2)$ represents the fluctuations of the barycenters.
%\end{Rq}

\bigskip

\textbf{A new condition on the interaction rates.}
If we replace the Lipschitz condition in Assumption $(B)$ by the following one:
there exist $\alpha, \zeta >0$ such that for any $(k_1,l_1), (k_2, l_2) \in \mathbb{N}\times \mathbb{R}_+$, $k_1 \geq k_2$
\begin{equation}
\label{eq:convexCondition}
(q^{+}-q^{-})(k_1,l_1)-(q^{+}-q^{-})(k_2,l_2) \leq -\alpha \left(k_1-k_2\right) + \zeta \left(l_1-l_2\right).
\end{equation} 

Then, under this condition and Assumption $(A)$, we have for any processes $(X_t^{N})_{t\geq 0}$ and $(Y_t^{N})_{t\geq 0}$ generated by \eqref{eq:generator} and for any $t\geq 0$,
\begin{equation*}
\mathcal{W}(\mathrm{Law}(X_t^{N}), \mathrm{Law}(Y_t^{N})) \leq e^{-\left((\lambda+\alpha)- \zeta\right) t} \mathcal{W}(\mathrm{Law}(X_0^{N}), \mathrm{Law}(Y_0^{N})).
\end{equation*} 

If $l_1=l_2$, the condition \eqref{eq:convexCondition} is a convexity condition on the first variable. For $l_1 \neq l_2$, the term $\zeta (l_1-l_2)$ represents the fluctuations of the barycenters. The resulting rate is slightly better than the one before, but we can find an example of interaction rates for which we obtain an optimal rate of convergence in Theorem \ref{th:coupling}. This example is inspired by Malrieu's model when the interaction potential is $W(x,y)=a(x-y)^2$, $a>0$. 
We assume that for a particle system $X=(X^{1}, \dots, X^{N})$, the interaction birth and death rates are given respectively by
$$
q_X^{+}(X^{i}) = a\dfrac{1}{N} \sum_{j=1}^{N} \left(X^{i} - X^{j}\right)_{-} \ \text{and} \ q_X^{-}(X^{i}) = a\dfrac{1}{N} \sum_{j=1}^{N} \left(X^{i} - X^{j}\right)_{+},
$$
where $a>0$, and the generator is given by
\begin{align}
\label{eq:generatorOptimal}
\mathcal{L}f(x) 
&= \sum_{i=1}^{N} \Big[\left(b_{x_{i}} + q_{\cdot}^{+}(x_{i})\right) \left(f(x+e_i)-f(x)\right) +\left(d_{x_{i}} + q_{\cdot}^{-}(x_{i})\right) \left(f(x-e_i)-f(x)\right) \mathds{1}_{x_i>0} \Big].
\end{align}

\begin{theo} Assume that Assumption $(A)$ is satisfied. Then, for the processes $(X_t^{N})_{t\geq 0}$ and $(Y_t^{N})_{t\geq 0}$ generated by \eqref{eq:generatorOptimal}, we have for all $t\geq 0$
\begin{equation*}
\mathcal{W}(\mathrm{Law}(X_t^{N}), \mathrm{Law}(Y_t^{N})) \leq e^{-\lambda t} \mathcal{W}(\mathrm{Law}(X_0^{N}), \mathrm{Law}(Y_0^{N})).
\end{equation*} 
\end{theo}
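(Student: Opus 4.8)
The plan is to re-use verbatim the coupling $\mathbb{L} = \mathbb{L}_1 + \mathbb{L}_2$ built in the proof of Theorem~\ref{th:coupling}, now with the interaction rates $q^{+}_{\cdot}, q^{-}_{\cdot}$ of \eqref{eq:generatorOptimal}. Since $\mathbb{L}_1$ involves only $b$ and $d$, Assumption $(A)$ gives $\mathbb{L}_1 d(X,Y) \leq -\lambda\, d(X,Y)$ exactly as before, so everything reduces to showing that the interaction part does not increase the coupling distance, i.e. $\mathbb{L}_2 d(X,Y) \leq 0$; then $\mathbb{L} d \leq -\lambda d$, and Gronwall's lemma together with the infimum over the initial couplings yields the claim. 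The structural point of this example is that the \emph{net} interaction drift is linear: since $(x)_{-}-(x)_{+} = -x$, one has $(q^{+}_{X} - q^{-}_{X})(X^{i}) = a\,(M^{N,1} - X^{i})$, a restoring force towards the empirical mean that conserves the center of mass $\sum_i X^i$. This is the discrete counterpart of Malrieu's quadratic potential $W(x,y)=a(x-y)^2$, for which the interaction contributes non-positively to the $L^2$ coupling via Jensen's inequality.

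Following the computation of $\mathbb{L}_2 d$ in Theorem~\ref{th:coupling}, I would write $\mathbb{L}_2 d(X,Y) = \sum_{i=1}^{N} H_i$ and split the coordinates into $S_{+} = \{i : X^{i} > Y^{i}\}$, $S_{-} = \{i: X^{i} < Y^{i}\}$ and $S_{0} = \{i : X^{i} = Y^{i}\}$, with $n_{\pm} = |S_{\pm}|$. On the off-diagonal coordinates $H_i$ depends only on $q^{+} - q^{-}$, so the linear form above gives $H_i = a\,\mathrm{sign}(X^i - Y^i)\,(M^{N,1}-M^{N,2}) - a\,|X^i - Y^i|$, whence the off-diagonal contribution equals $-a\,d(X,Y) + a\,(M^{N,1}-M^{N,2})(n_{+}-n_{-})$. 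This is the discrete image of the confining term $-2a\sum_i(X^i-Y^i)^2$ and the barycenter-fluctuation term $+2aN(\bar X - \bar Y)^2$ of the continuous case.

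The crux is the control of the diagonal terms $J_i$ (for $i \in S_0$, with $c := X^i = Y^i$), which measure the rate at which the coupling creates a discrepancy where the two systems currently agree. Here the naive bound $J_i \leq \frac{2a}{N} d(X,Y)$ from the triangle inequality is too lossy: it loses a factor $2$ and destroys the final cancellation. Instead I would compute $J_i$ \emph{exactly}. Writing $\delta = M^{N,1}-M^{N,2}$ and $\Sigma_{c} = \sum_{j}\big(|X^j-c| - |Y^j-c|\big)$, the elementary identities $(t)_{+}-(-t)_{+} = t$ and $(t)_{+}+(-t)_{+} = |t|$ give $J_i = \frac{a}{2N}\big(|\Sigma_c + N\delta| + |\Sigma_c - N\delta|\big)$, and since $|A+B|+|A-B| = 2\max(|A|,|B|)$ this is exactly $J_i = \frac{a}{N}\max(|\Sigma_{c}|, N|\delta|)$. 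The reverse triangle inequality then yields $|\Sigma_c| \leq d(X,Y)$ and $N|\delta| \leq d(X,Y)$, so $J_i \leq \frac{a}{N} d(X,Y)$.

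Finally I would combine everything by a telescoping argument. Using $|\delta| \leq d(X,Y)/N$ and $|S_0| = N - n_{+} - n_{-}$, and writing $d = d(X,Y)$,
\[
\mathbb{L}_2 d(X,Y) \leq -a\,d + \frac{a\,d}{N}(n_{+}+n_{-}) + (N - n_{+} - n_{-})\,\frac{a\,d}{N} = 0 .
\]
This gives $\mathbb{L} d \leq -\lambda d$, hence the exponential decay at the optimal rate $\lambda$. The main obstacle is precisely the diagonal estimate above: one must identify the exact $\max$-formula for $J_i$ rather than settle for the triangle inequality, since only the sharp constant $a/N$ makes the off-diagonal and diagonal contributions cancel.
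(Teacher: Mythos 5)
Your proposal is correct and follows essentially the same route as the paper: the same coupling $\mathbb{L}=\mathbb{L}_1+\mathbb{L}_2$, the same reduction to showing $\mathbb{L}_2 d(X,Y)\leq 0$ via the decomposition into the $H_i$, and the same final cancellation between the confining term $-a\,d(X,Y)$ and the barycenter/diagonal contributions of total size $a\,d(X,Y)$. The only difference is presentational: where the paper directly states the per-coordinate bound $H_i \leq -a\vert X^i-Y^i\vert + \frac{a}{N}\sum_j \vert X^j-Y^j\vert$ (the diagonal case following from the termwise identity $\vert u_- - v_-\vert + \vert u_+ - v_+\vert = \vert u-v\vert$), you justify the sharp constant $a/N$ on the diagonal coordinates through the exact formula $J_i = \frac{a}{N}\max(\vert\Sigma_c\vert, N\vert\delta\vert)$, which is a valid and slightly more explicit derivation of the same estimate.
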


\begin{proof}
Using the same coupling and the same notations as in the proof of Theorem \ref{th:coupling}, we have
$$
\mathbb{L}_{2} d(X,Y) = \sum_{i=1}^{N} H_i,
$$
where for all $i \in \{1, \dots, N\}$
\begin{align*}
H_i
&= \left((q_X^{+}-q_X^{-})(X^{i}) - (q_Y^{+}-q_Y^{-})(Y^{i})\right)\mathds{1}_{X^{i} > Y^{i}}\\
&+ \left((q_Y^{+}-q_Y^{-})(Y^{i}) - (q_X^{+}-q_X^{-})(X^{i}, M^{N,1})\right)\mathds{1}_{X^{i} < Y^{i}}\\
&+ \Big( \vert q_X^{+}(X^{i}) - q_Y^{+}(X^{i}) \vert + \vert q_X^{-}(X^{i}) - q_Y^{-}(X^{i})\vert \Big) \mathds{1}_{X^{i} = Y^{i}}.
&\quad \quad \\
&\leq a\left[-\left(X^{i}-Y^{i}\right) + \dfrac{1}{N} \sum_{i=1}^{N} \left(X^{j}-Y^{j}\right)\right]\mathds{1}_{X^{i} > Y^{i}}\\
&+ a\left[-\left(Y^{i}-X^{i}\right) + \dfrac{1}{N} \sum_{i=1}^{N} \left(Y^{j}-X^{j}\right)\right]\mathds{1}_{Y^{i} > X^{i}}\\
&+ a\dfrac{1}{N} \sum_{i=1}^{N} \vert X^{j}-Y^{j}\vert \mathds{1}_{X^{i} = Y^{i}}.
\end{align*}

Thus,
$$
H_i \leq - a\vert X^{i}-Y^{i}\vert + a\dfrac{1}{N} \sum_{i=1}^{N} \vert X^{j}-Y^{j}\vert,
$$
and
$$
\sum_{i=1}^{N} H_i \leq 0.
$$

We deduce that $\mathbb{L}d(X,Y) \leq -\lambda d(X,Y).$
\end{proof}

\bigskip

\section{Proof of Theorem \ref{th:PropChaos}}
\label{sect:proofProgChaos}
Let us give an important consequence of Theorem \ref{th:PropChaos}: with explicit rate, we have the propagation of chaos for the system of interacting particles.
\begin{coro}[Strong Propagation of chaos]
Let $\mu_t^{(k,N)}$ be the law of $k$ particles among $N$ at time $t$ and $u_t$ be the law of the nonlinear process. Then,
$$
\sup_{t\geq 0} \mathcal{W}(\mu_t^{(k,N)}, u_t^{\otimes k}) \leq \dfrac{kK}{\sqrt{N}}.
$$
\end{coro}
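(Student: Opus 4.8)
The plan is to reuse verbatim the coupling constructed for Theorem~\ref{th:PropChaos}. That theorem produces, on a common probability space, the interacting system $(X^{1,N},\dots,X^{N,N})$ together with $N$ independent copies $(\overline{X}^1,\dots,\overline{X}^N)$ of the nonlinear process, coupled so that $\sup_{t\geq 0}\E|X_t^{1,N}-\overline{X}_t^1|\leq K/\sqrt{N}$. First I would observe that this same construction controls every coordinate simultaneously: since the initial data $X_0^{i,N}$ are i.i.d., $\overline{X}_0^i=X_0^{i,N}$, and both the generator \eqref{eq:generator} and the coupling are symmetric under permutation of the particle labels, the pairs $(X^{i,N},\overline{X}^i)_{1\leq i\leq N}$ are exchangeable in $i$. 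Hence $\E|X_t^{i,N}-\overline{X}_t^i|=\E|X_t^{1,N}-\overline{X}_t^1|$ for every $i$ and every $t$, so the bound $K/\sqrt{N}$ holds uniformly in the label.

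Next I would exhibit the $k$-dimensional marginal of this coupling as an admissible coupling in the definition \eqref{eq:Wassdistance} of $\mathcal{W}$ on $\mathbb{N}^k$. The vector $(X_t^{1,N},\dots,X_t^{k,N})$ has law $\mu_t^{(k,N)}$ by definition, while $(\overline{X}_t^1,\dots,\overline{X}_t^k)$ has law $u_t^{\otimes k}$ because the $\overline{X}^i$ are independent and each $\overline{X}_t^i$ has law $u_t$. Since the relevant distance is the $l^1$ cost $d(x,\overline{x})=\sum_{j=1}^k|x_j-\overline{x}_j|$, this coupling yields
$$
\mathcal{W}(\mu_t^{(k,N)},u_t^{\otimes k})\leq \E\Big[\sum_{j=1}^k|X_t^{j,N}-\overline{X}_t^j|\Big]=\sum_{j=1}^k\E|X_t^{j,N}-\overline{X}_t^j|.
$$

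Finally, inserting the uniform per-coordinate bound from the first step, each of the $k$ summands is at most $K/\sqrt{N}$, so the right-hand side is at most $kK/\sqrt{N}$ for all $t\geq 0$; taking the supremum over $t$ gives the claim. The only point requiring care—and the step I expect to be the main obstacle—is justifying the exchangeability of the coupled pairs, that is, checking that the coupling of Theorem~\ref{th:PropChaos} is realized symmetrically in the labels so that the single-particle estimate transfers to every coordinate without loss. Once this symmetry is granted, the additivity of the $l^1$ transport cost makes the remainder immediate.
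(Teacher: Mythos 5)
Your proposal is correct and follows essentially the same route as the paper: reuse the coupling constructed for Theorem~\ref{th:PropChaos}, invoke exchangeability to transfer the single-coordinate estimate to all $k$ coordinates, and exploit the additivity of the $l^1$ transport cost to bound $\mathcal{W}(\mu_t^{(k,N)}, u_t^{\otimes k})$ by $\sum_{i=1}^{k}\E\vert X_t^{i,N}-\overline{X}_t^{i}\vert \leq kK/\sqrt{N}$. The symmetry point you flag as the main obstacle is precisely the exchangeability the paper invokes in a single line, so your argument is just a more detailed rendering of the paper's proof.
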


\begin{proof}[Proof]
By exchangeability, the $k$-marginals of $\mathrm{Law}(X_t^{N})$ do not depend on the choice of coordinates. Thus,
$$
\mathcal{W}(\mu_t^{(k,N)}, u_t^{\otimes k}) \leq \sum_{i=1}^{k} \E[\vert X_t^{i,N} - \overline{X}_t^{i} \vert].
$$
\end{proof}

\begin{proof}[Proof of Theorem \ref{th:PropChaos}]
To prove the propagation of chaos phenomenon, we construct a coupling between the particle system $(X^{1,N}, \dots, X^{N,N})$ and $N$ independent nonlinear processes $(\overline{X}^{i}, \dots, \overline{X}^{N})$. For $i \in \{1, \dots, N\}$ and $t\geq 0$
\begin{itemize}
\item $\overline{X}_0^{i} = X_0^{i,N}$
\item $\mathrm{Law}(\overline{X}_{t}^{i}) =u_t$
\item the transition rates of $\overline{X}^{i}$ at time $t$ are given by
\[ 
  \begin{array}{ c c c c c}
     i & \rightarrow & i+1 & \text{with rate} \ b_i + q^{+}(i,\Vert u_t\Vert), &\\
     i & \rightarrow & i-1 & \text{with rate} \ d_i + q^{-}(i,\Vert u_t\Vert), & \text{for} \ i\geq 1\\
     i & \rightarrow & j & \text{with rate} \ 0, & \ \text{if} \ j\notin \{i-1,i+1\}.\\
  \end{array} 
\]
\end{itemize}

Using the coupling and the notations introduced in the proof of Theorem \ref{th:coupling},   we have under Assumption $(A)$
\begin{align*}
\mathbb{L}_{1} d(X,\overline{X}) &\leq -\lambda d(X,\overline{X}),
\end{align*}
and under Assumption $(B)$
\begin{align*}
\mathbb{L}_{2}d(X, \overline{X})
%&\leq \sum_{i=1}^{N} \vert (q^{+}-q^{-})(X^{i,N}, M^{N})- (q^{+}-q^{-})(\overline{X}^{i}, \Vert u \Vert ) \vert \\
&\leq \alpha d(X, \overline{X}) + \alpha N \vert M^{N} - \Vert u \Vert \vert,
\end{align*}
where we recall that the distance $d$ is the $l^{1}$-distance.
But, for $t\geq 0$
\begin{align*}
\E \vert M_{t}^{N} - \Vert u_{t} \Vert \vert
&\leq \E \Big\vert M_{t}^{N} - \dfrac{1}{N} \sum_{i=1}^{N} \overline{X}_{t}^{i} \Big\vert + \E \Big\vert \dfrac{1}{N} \sum_{i=1}^{N} \overline{X}_{t}^{i} - \Vert u_t \Vert \Big\vert.
\end{align*} 

On the one hand
\begin{align*}
\E \Big\vert M_{t}^{N} - \dfrac{1}{N} \sum_{i=1}^{N} \overline{X}_{t}^{i} \Big\vert
&\leq \dfrac{1}{N} \E \left( \sum_{i=1}^{N} \vert X_{t}^{i,N} - \overline{X}_{t}^{i}\vert \right) = \dfrac{1}{N}\E(d(X_t, \overline{X}_t)).
\end{align*}
On the other hand, by independence and Cauchy Schwarz inequality 
\begin{align*}
\E \Big\vert \dfrac{1}{N} \sum_{i=1}^{N} \overline{X}_{t}^{i} - \Vert u_t \Vert \Big\vert
&= \E \Big\vert \dfrac{1}{N} \sum_{i=1}^{N} \overline{X}_{t}^{i} - \E(\overline{X}_t^{1}) \Big\vert\\
&= \dfrac{1}{N} \E \Big\vert \sum_{i=1}^{N} \overline{X}_{t}^{i} - \E \left(\sum_{i=1}^{N} \overline{X}_{t}^{i}\right) \Big\vert\\
&\leq \dfrac{1}{N} \left( \var\left( \sum_{i=1}^{N} \overline{X}_{t}^{i}\right) \right)^{\frac{1}{2}}\\
&\leq \dfrac{1}{N} \left(\sum_{i=1}^{N} \var( \overline{X}_{t}^{i})\right)^{\frac{1}{2}}\\
& \leq \dfrac{1}{\sqrt{N}} \var(\overline{X}_{t}^{1})^{\frac{1}{2}}.
\end{align*}

Now, the moments of the process $(\overline{X}_t)_{t\geq 0}$ are bounded. More precisely, the process $(\overline{X}_t)_{t\geq 0}$ has finite exponential moments, uniform in time, as soon as it is finite at time $0$.

\begin{lem}[Exponential moment of $(\overline{X}_t)_{t\geq 0}$]
\label{lem:expoMoment} Let $\delta>0$ and $\beta(\delta):= \displaystyle \inf_{x\in \mathbb{N}^{*}}\left(d_x e^{-\delta} -b_x\right)$ and let $K_1 = \alpha \left( \Vert u_0\Vert + \dfrac{b_0}{\lambda-2\alpha} \right)$. Then, under Assumptions $(A)$ and $(B)$ and if $\beta(\delta)-K_1>0$, $\displaystyle \sum_{i\geq 0} e^{\delta i} u_t(i)$ is finite for every $t\geq 0$ as soon as $\displaystyle \sum_{i\geq 0} e^{\delta i} u_0(i)$ is finite and
$$
\sum_{i\geq 0} e^{\delta i} u_t(i) \leq \sum_{i\geq 0} e^{\delta i} u_0(i) + \dfrac{b_0}{\beta(\delta)-K_1}.
$$
\end{lem}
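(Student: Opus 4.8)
The plan is to track the exponential moment $m_t := \langle u_t, f\rangle = \sum_{i\ge 0} e^{\delta i}\, u_t(i)$ associated with the test function $f(i) = e^{\delta i}$, and to show that it satisfies a linear differential inequality of the form $\frac{d}{dt} m_t \le (e^\delta - 1)\big(b_0 - (\beta(\delta) - K_1)\, m_t\big)$. Since $f$ is unbounded, the first (and essentially technical) step is to legitimate the identity $\frac{d}{dt}\langle u_t, f\rangle = \langle u_t, \mathcal{G}_{u_t} f\rangle$: I would apply the nonlinear master equation \eqref{eq:nonlinearMasterEq} to the bounded truncations $f_n(i) = e^{\delta (i\wedge n)}$, derive the inequality below for the truncated moments $m_t^n = \langle u_t, f_n\rangle$, and then pass to the limit $n\to\infty$ by monotone convergence, using the non-explosion of the process (Appendix) and the finiteness at $t=0$ to guarantee that the limits stay finite.

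The core computation is the action of $\mathcal{G}_{u_t}$ on $f=e^{\delta\cdot}$. Since $f(i\pm1)-f(i) = e^{\delta i}(e^{\pm\delta}-1)$, for $i\ge 1$ one gets $\mathcal{G}_{u_t} f(i) = e^{\delta i}\big[(b_i + q^{+})(e^\delta - 1) + (d_i + q^{-})(e^{-\delta}-1)\big]$, while $\mathcal{G}_{u_t}f(0) = (b_0 + q^{+}(0,\Vert u_t\Vert))(e^\delta-1)$. The key algebraic identity is $b_i(e^\delta-1)+d_i(e^{-\delta}-1) = -(e^\delta-1)(d_i e^{-\delta} - b_i)$, which by the very definition of $\beta(\delta)$ is bounded above by $-(e^\delta-1)\beta(\delta)$ for every $i\ge 1$ (and equals $b_0(e^\delta-1)$ at $i=0$). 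This is what produces the restoring term $-\beta(\delta)(e^\delta-1)\, m_t$ together with the boundary source $b_0(e^\delta-1)$.

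It then remains to absorb the interaction. For this I would first establish the uniform first moment bound $\sup_{t\ge 0}\Vert u_t\Vert \le \Vert u_0\Vert + \frac{b_0}{\lambda - 2\alpha}$, equivalently $\alpha\sup_t\Vert u_t\Vert\le K_1$: applying the master equation to $\varphi(i)=i$ gives $\frac{d}{dt}\Vert u_t\Vert = \sum_i u_t(i)\big[(b_i - d_i) + (q^{+}-q^{-})(i,\Vert u_t\Vert)\big]$, and Assumption $(A)$ (in the form $b_i - d_i\le b_0 - \lambda i$) together with the Lipschitz Assumption $(B)$ yields $\frac{d}{dt}\Vert u_t\Vert\le b_0 - (\lambda-2\alpha)\Vert u_t\Vert$, whence the claim by Gronwall since $\lambda-2\alpha>0$. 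For the exponential moment itself, the term carrying $q^{-}$ in $\mathcal{G}_{u_t}f$ has the negative factor $(e^{-\delta}-1)$ and may be discarded; the remaining contribution $q^{+}(i,\Vert u_t\Vert)(e^\delta-1)$ is then controlled through the Lipschitz bound of $(B)$ and the uniform first moment estimate, contributing at most $K_1(e^\delta-1)\, m_t$.

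Combining the three contributions gives $\frac{d}{dt}m_t\le (e^\delta-1)\big(b_0 - (\beta(\delta)-K_1)\, m_t\big)$, and since $\beta(\delta)-K_1>0$, a comparison with the associated linear ODE (Gronwall) yields $m_t\le \max\big(m_0,\ \tfrac{b_0}{\beta(\delta)-K_1}\big)\le m_0 + \tfrac{b_0}{\beta(\delta)-K_1}$, which is the stated estimate, with finiteness at every $t$ inherited from finiteness at $t=0$. I expect the main obstacle to be twofold: rigorously justifying the differentiation of the unbounded moment (the truncation plus non-explosion argument), and, above all, dominating $q^{+}(i,\Vert u_t\Vert)$ by the restoring drift. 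This last point is exactly where the uniform first moment control $\alpha\Vert u_t\Vert\le K_1$ and the condition $\beta(\delta)-K_1>0$ enter, and it is the delicate heart of the proof.
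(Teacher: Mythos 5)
Your proposal follows essentially the same route as the paper's proof: first the uniform bound $\alpha\sup_{t\geq 0}\Vert u_t\Vert \leq K_1$ obtained by applying the master equation to $\varphi(i)=i$ and Gronwall, then the differential inequality $\frac{d}{dt}\sum_i e^{\delta i}u_t(i) \leq -(\beta(\delta)-K_1)(e^{\delta}-1)\sum_i e^{\delta i}u_t(i) + b_0(e^{\delta}-1)$ obtained by applying $\mathcal{G}_{u_t}$ to $f(i)=e^{\delta i}$, discarding the $q^{-}$ term, and absorbing $q^{+}(i,\Vert u_t\Vert)$ via $K_1$, exactly as in the paper. Your two additional points of care (justifying the differentiation of the unbounded moment by truncation, and the explicit ODE comparison at the end) are steps the paper leaves implicit, so this is a correct and if anything slightly more complete rendering of the same argument.
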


\begin{proof}[Proof of Lemma \ref{lem:expoMoment}]
Let us first remark that, if $\lambda-2\alpha>0$ 
$$
\Vert u_t\Vert \leq \Vert u_0\Vert + \dfrac{b_0}{\lambda-2\alpha}.
$$
Indeed, applying the operator $\mathcal{G}_{(\cdot)}$ defined in \eqref{eq:generatorParticle} to the function $f(i)=i$ we have
\begin{align*}
\mathcal{G}_{u_t}f(i)
&= b_i - d_i + \left( q^{+} -q^{-}\right)\left(i, \Vert u_t\Vert \right)\\
&\leq -\lambda i + b_0 + \alpha\left( i+ \Vert u_t\Vert\right).
\end{align*}
By equation \eqref{eq:nonlinearMasterEq} we obtain
$$
\dfrac{d}{dt}\Vert u_t\Vert \leq -\left(\lambda-2\alpha\right) \Vert u_t \Vert + b_0
$$
and Gronwall's lemma gives the result.\\
Now, let us take $f(i) = e^{\delta i}$, $\delta>0$. Then, under Assumption $(B)$
\begin{align*}
\mathcal{G}_{u_t}f(i)
&\leq -(e^{\delta}-1) e^{\delta i} \left[ e^{- \delta}d_{i} - b_{_i}\right]\mathds{1}_{i >0} + (e^{\delta}-1) e^{\delta i} q^{+}(i, \Vert u_t\Vert) + b_0 (e^{\delta}-1)\\
&\leq - \beta(\delta) (e^{\delta}-1)f(i) + (e^{\delta}-1)f(i) q^{+}(0, \Vert u_t\Vert) + b_0 (e^{\delta}-1)\\
&\leq - \beta(\delta) (e^{\delta}-1)f(i) + \alpha(e^{\delta}-1)f(i) \Vert u_t\Vert + b_0 (e^{\delta}-1)\\
&\leq - \left(\beta(\delta)-K_1\right) (e^{\delta}-1)f(i) + b_0 (e^{\delta}-1).
\end{align*}
Thus,
$$
\dfrac{d}{dt}\sum_{i\geq 0} e^{\delta i} u_t(i) \leq - \left(\beta(\delta) -K_1\right) (e^{\delta}-1) \sum_{i\geq 0} e^{\delta i} u_t(i) + b_0 (e^{\delta}-1).
$$
\end{proof}

We are now able to conclude the proof. For $t\geq 0$ and $i\in \{1, \dots, N\}$, let
$$
\gamma(t) = \E \vert X_t^{i,N} - \overline{X}_t^{i} \vert.
$$

Then, by the equality $\partial_{t} \mathbb{P}_{t}f = \mathbb{P}_{t}\mathbb{L}f$ (where we recall that $\mathbb{P}$ is the semi-group associated with $\mathbb{L}$), Lemma \ref{lem:expoMoment} and the exchangeability of the marginals of the particle system, there exists $K>0$ such that 
$$
\partial_t \gamma(t) \leq -(\lambda-2\alpha) \gamma(t) + \dfrac{K}{\sqrt{N}}.
$$
Gronwall's lemma gives for every $t\geq 0$
$$
\gamma(t) \leq e^{-(\lambda-2\alpha) t} \gamma(0) + \dfrac{K}{\sqrt{N}} \left( 1- e^{-(\lambda-2\alpha)t} \right).
$$
As the initial conditions are the same, we obtain \eqref{eq:chaos}.
\end{proof}

\bigskip

\begin{proof}[Proof of Corollary \ref{cor:ConvEmpirique}]
Let $\varphi$ be a function such that $\Vert \varphi \Vert_{Lip} \leq 1$. Then, 
using the same coupling as Theorem \ref{th:PropChaos}, we have
\begin{align*}
\E \vert \mu_t^{N}(\varphi) -u_t(\varphi) \vert
&= \E \Big\vert \dfrac{1}{N}\sum_{i=1}^{N} \varphi(X_t^{i,N}) - \E[\varphi(\overline{X}_{t}^{1})] \Big\vert\\
&\leq \E \Big\vert \dfrac{1}{N}\sum_{i=1}^{N} \varphi(X_t^{i,N}) - \dfrac{1}{N}\sum_{i=1}^{N} \varphi(\overline{X}_t^{i}) \Big\vert + \E \Big\vert \dfrac{1}{N}\sum_{i=1}^{N} \varphi(\overline{X}_t^{i}) - \E[\varphi(\overline{X}_{t}^{1})] \Big\vert.
\end{align*}
By Theorem \ref{th:PropChaos}, there exists $K>0$ such that
$$
\E \Big\vert \dfrac{1}{N}\sum_{i=1}^{N} \varphi(X_t^{i,N}) - \dfrac{1}{N}\sum_{i=1}^{N} \varphi(\overline{X}_t^{i}) \Big\vert \leq \dfrac{K}{\sqrt{N}}.
$$
Now, the Cauchy-Schwarz inequality and Lemma \ref{lem:expoMoment} imply the existence of a constant $C>0$ such that
$$
 \E \Big\vert \dfrac{1}{N}\sum_{i=1}^{N} \varphi(\overline{X}_t^{i}) - \E[\varphi(\overline{X}_{t}^{1})] \Big\vert \leq \dfrac{C}{\sqrt{N}}.
$$ 
\end{proof}

\begin{proof}[Proof of Corollary \ref{cor:devInequality}]
Let $\nu^{N}$ be the empirical measure of the independent processes $(\overline{X}^{i})_{i\in \{1, \dots, N\}}$. Namely, for any $t\geq 0$
$$
\nu_t^{N} = \dfrac{1}{N} \sum_{i=1}^{N} \delta_{\overline{X}_{t}^{i,N}}.
$$
Then, the triangular inequality gives
$$
\mathcal{W}(\mu_t^{N}, u_t) \leq \mathcal{W}(\mu_t^{N}, \nu_t^{N}) + \mathcal{W}(\nu_t^{N}, u_t).
$$

As for any $t\geq 0$,
$$
\mathcal{W}(\mu_t^{N}, \nu_t^{N}) \leq \dfrac{1}{N} \sum_{i=1}^{N} \vert X_t^{i,N} - \overline{X_t}^{i} \vert,
$$

then, by Theorem \ref{th:PropChaos}, there exists a constant $K>0$ such that
\begin{align*}
\mathbb{E}[\mathcal{W}(\mu_t^{N}, \nu_t^{N})]
&\leq \dfrac{1}{N} \sum_{i=1}^{N} \sup_{t\geq 0} \mathbb{E} \vert X_t^{i,N} - \overline{X_t}^{i} \vert\\
&\leq \dfrac{K}{\sqrt{N}}.
\end{align*}

On the other hand, by Lemma \ref{lem:expoMoment}, the process $\overline{X}^{i}$, for every $i\in \{1, \dots, N\}$, has finite exponential moments. So, applying Theorem 1 of \cite{FG15} with $p=d=1$ and $q>2$, there exists a constant $\overline{K}$ such that
\begin{align*}
\mathbb{E}[\mathcal{W}(\nu_t^{N}, u_t)]
&\leq \dfrac{\overline{K}}{\sqrt{N}}.
\end{align*}

We deduce that for every $t\geq 0$,
$$
\mathbb{E}[\mathcal{W}(\mu_t^{N}, u_t)] \leq \dfrac{K+\overline{K}}{\sqrt{N}},
$$
which by Markov's inequality ends the proof.
\end{proof}

\section{Proof of Theorem \ref{th:LTBnonlinear}}
\label{sect:proofNonlinear}

\begin{proof}[Proof of Theorem \ref{th:LTBnonlinear}]
Let $(X_t^{N})_{t \geq 0}$ and $(Y_t^{N})_{t \geq 0}$ be two particle systems generated by \eqref{eq:generator} with initial laws $u_0^{\otimes N}$ and $v_0^{\otimes N}$ respectively. Let $u_t^{1,N}$ (resp. $v_t^{1,N}$) be the first marginal of $\mathrm{Law}(X_t^{N})$ (resp. $\mathrm{Law}(Y_t^{N})$). 
Then, the triangular inequality yields
$$
\mathcal{W}(u_t, v_t) \leq \mathcal{W}(u_t, u_t^{1,N}) + \mathcal{W}(u_t^{1,N}, v_t^{1,N}) + \mathcal{W}(v_t^{1,N},v_t).
$$
The uniform propagation of chaos (Theorem \ref{th:PropChaos}) gives for every $t\geq 0$
\begin{equation*}
\mathcal{W}(u_t, u_t^{1,N}) \leq \E \vert \overline{X}_t^{1} - X_t^{1,N} \vert \leq \dfrac{K}{\sqrt{N}},
\end{equation*}
and
\begin{equation*}
\mathcal{W}(v_t, v_t^{1,N}) \leq \dfrac{K}{\sqrt{N}}.
\end{equation*}
Now, by exchangeability of the marginals of the particles and by Theorem \ref{th:coupling} we have
\begin{align*}
\mathcal{W}(u_t^{1,N}, v_t^{1,N}) 
&\leq \dfrac{1}{N} \mathcal{W}(\mathrm{Law}(X_t^{N}), \mathrm{Law}(Y_t^{N}))\\
&\leq \dfrac{1}{N} e^{-(\lambda-2\alpha) t} \mathcal{W}(\mathrm{Law}(X_0^{N}), \mathrm{Law}(Y_0^{N}))\\
&\leq \dfrac{1}{N} e^{-(\lambda-2\alpha) t} N \mathcal{W}(u_0, v_0)\\
&\leq e^{-(\lambda-2\alpha) t} \mathcal{W}(u_0, v_0).
\end{align*}
We deduce that 
$$
\mathcal{W}(u_t, v_t) \leq \dfrac{2K}{\sqrt{N}} + e^{-(\lambda-2\alpha) t} \mathcal{W}(u_0, v_0).
$$
Taking the limit as $N$ tends to infinity we obtain \eqref{eq:CauchyW1}. Then, the sequence $(u_t)_{t\geq 0}$ is a Cauchy sequence for the $\mathcal{W}_1$-Wasserstein distance and thus admits a limit $u_{\infty}$.
\end{proof}

\section{Appendix}
\label{sect:annexe}
In the following theorem, we prove the existence of a Lyapunov function.
This function ensures the non-explosion and the positive recurrence of the particle system.
Along this section, we note $\kappa= \lambda-2\alpha$.
\begin{theo}[Lyapunov function]
\label{th:lyapunov}
Let $V$ be the function $x\in \mathbb{N}^{N} \mapsto \displaystyle \sum_{k=1}^{N} x_k$ and let us assume that Assumptions $(A)$ and $(B)$ are satisfied.
Then, for $x\in \mathbb{N}^{N}$
\begin{equation}
\label{eq:lya}
\mathcal{L}V(x) \leq -\kappa V(x) +  b_0 N.
\end{equation}
Thus, the function $V$ is a Lyapunov function for $\mathcal{L}$. In particular, the process $(X_t^{N})_{t\geq 0}$ is
non-explosive and positive recurrent.
\end{theo}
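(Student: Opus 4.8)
The plan is to compute $\mathcal{L}V$ directly and then bound it coordinate by coordinate using Assumptions $(A)$ and $(B)$, reusing the estimate already performed for $\mathcal{G}_{u_t}f$ with $f(i)=i$ in the proof of Lemma~\ref{lem:expoMoment}; the only genuinely new feature is the way the mean-field term aggregates over all $N$ coordinates. First I would note that for $V(x)=\sum_{k=1}^N x_k$ one has $V(x+e_i)-V(x)=1$ and $V(x-e_i)-V(x)=-1$. Substituting into the generator \eqref{eq:generator}, and using $d_0=q^-(0,m)=0$ — so that the down-rate of coordinate $i$ already vanishes when $x_i=0$ and the indicator $\mathds{1}_{x_i>0}$ may be dropped — one obtains
$$
\mathcal{L}V(x)=\sum_{i=1}^N\Big[\,b_{x_i}-d_{x_i}+(q^+-q^-)(x_i,M^N)\,\Big].
$$

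Next I would feed in the two assumptions. Telescoping Assumption $(A)$, $\nabla^{+}(d-b)\geq\lambda$ gives $(d-b)(n)\geq(d_0-b_0)+\lambda n=-b_0+\lambda n$, hence $b_{x_i}-d_{x_i}\leq b_0-\lambda x_i$ for each coordinate. The Lipschitz bound of Assumption $(B)$, applied from the reference point $(0,0)$ and using $q^-(0,\cdot)=0$, gives $(q^+-q^-)(x_i,M^N)\leq\alpha(x_i+M^N)$. Summing over $i$ and using $\sum_{i=1}^N M^N=N M^N=\sum_{i=1}^N x_i=V(x)$, the coefficient of $V(x)$ collects to $-\lambda+\alpha+\alpha=-(\lambda-2\alpha)=-\kappa$ and the constant terms to $b_0N$, which is exactly \eqref{eq:lya}. \emph{The step to watch} is this aggregation: each coordinate contributes a term $\alpha M^N$, and since $M^N=V(x)/N$, summing the $N$ such contributions produces a \emph{second} copy of $\alpha V(x)$. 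This is precisely why the drift rate is $\kappa=\lambda-2\alpha$ rather than $\lambda-\alpha$, mirroring the factor $2\alpha$ appearing in Theorem~\ref{th:coupling}.

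Finally, to obtain the probabilistic conclusions I would invoke the Foster--Lyapunov theory of \cite{MT93}. The function $V$ is norm-like (coercive) on $\mathbb{N}^N$, and \eqref{eq:lya} is a geometric drift condition $\mathcal{L}V\leq-\kappa V+b_0N$ with $\kappa>0$. In particular $\mathcal{L}V$ is bounded above by the constant $b_0N$, so Dynkin's formula yields $\E[V(X_t^N)]\leq V(X_0^N)+b_0Nt<\infty$ for every finite $t$, which rules out explosion; and outside the finite sublevel set $\{V\leq b_0N/\kappa\}$ the drift is strictly negative, so together with the irreducibility of $(X_t^N)_{t\geq0}$ the standard criterion delivers positive recurrence (indeed exponential ergodicity). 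I do not expect any serious difficulty here beyond verifying that $V$ is norm-like, which is immediate. The only arithmetic subtlety remains the one-sided use of $(B)$: the reference value $(q^+-q^-)(0,0)=q^+(0,0)$ is nonnegative and vanishes in Example~\ref{ex:example}, so it leaves the drift coefficient $-\kappa$ untouched and, when it vanishes, reproduces the constant $b_0N$ exactly.
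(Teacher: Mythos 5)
Your drift computation is correct and coincides with the paper's: the paper likewise writes $\mathcal{L}V(x)$ coordinatewise, telescopes Assumption $(A)$ to get $d_{x_i}-b_{x_i}\geq \lambda x_i-b_0$, bounds $(q^{+}-q^{-})(x_i,M^{N})\leq \alpha\left(x_i+M^{N}\right)$ via Assumption $(B)$, and collects $\sum_{i=1}^{N}\alpha M^{N}=\alpha V(x)$ to obtain the rate $-\kappa=-(\lambda-2\alpha)$. (The paper keeps the indicator $\mathds{1}_{x_i>0}$ and treats the boundary term $b_0+q^{+}(0,M^{N})$ separately rather than dropping the indicator via $d_0=q^{-}(0,\cdot)=0$ as you do, but the two computations are equivalent; and the implicit use of $q^{+}(0,0)=0$ that you flag is present in the paper's proof as well, so you are, if anything, more explicit about it.) Where you genuinely diverge is in the probabilistic conclusions. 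You outsource non-explosion and positive recurrence to the Foster--Lyapunov theory of \cite{MT93}, which the paper itself acknowledges is sufficient; but the paper's appendix instead proves both facts by hand: for non-explosion it introduces $W(x)=\frac{1}{N}\left(V(x)-\frac{b_0N}{\kappa}\right)$, uses the supermartingale $e^{\kappa t}W(X_t)$ stopped at $\tau_A=\inf\{t:W(X_t)>A\}$, and bounds $\mathbb{P}_x(\tau_A<t)\leq \frac{1}{A}\left(W(x)+b_0e^{\kappa t}\right)$; for positive recurrence it derives the explicit exponential-moment bound $\E_x\left[e^{\frac{\kappa}{2}H_A}\right]\leq \frac{\kappa V(x)}{2b_0N}$ for the hitting time of the finite set $A=\{x:\kappa V(x)<2b_0N\}$. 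This buys the paper quantitative, self-contained estimates, whereas your route is shorter but leans on the cited theorems. One point to tighten in your version: the step ``Dynkin's formula yields $\E[V(X_t^N)]\leq V(X_0^N)+b_0Nt$, which rules out explosion'' is circular as stated, since applying Dynkin's formula up to a deterministic time $t$ already presupposes the process is defined (non-explosive) up to $t$; the correct order is to localize first — apply Dynkin up to $t\wedge\tau_n$ with $\tau_n$ the exit time of $\{V\leq n\}$, deduce $n\,\mathbb{P}(\tau_n\leq t)\leq V(x)+b_0Nt$, and let $n\to\infty$ — which is exactly the stopping-time argument the paper carries out with $\tau_A$, and is also what the cited criterion of \cite{MT93} encapsulates.
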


\bigskip

Under this assumption and without rate of convergence, the existence of a Lyapunov function combined with the irreducibility of the process (which implies that every compact is a small set) provides a sufficient criterion ensuring that the interacting particle system is ergodic \cite[Theorem 6.1]{MT93}. And the inequality below implies that under the invariant distribution, the mean of the particle system $M^{N}$ is upper bounded.

\begin{proof}[Proof]
\textbf{Lyapunov function.}\\
Let $V$ be the function $x\mapsto \displaystyle \sum_{k=1}^{N} x_k$. Then,
\begin{align*}
\mathcal{L}V(x) 
&= \sum_{i=1}^{N} \left[ b_{x_i} +q^{+}(x_i,M^{N}) - \left(d_{x_i} + q^{-}(x_i,M^{N})\right)\mathds{1}_{x_i >0}\right]\\
&= \sum_{i=1}^{N} \left[ \left( b_{x_i} - d_{x_i}\right) + (q^{+}- q^{-})(x_i,M^{N})\right]\mathds{1}_{x_i >0} + \left(b_0+ q^{+}(0,M^{N}) \right) \sum_{i=1}^{N} \mathds{1}_{x_i=0}.
\end{align*}

By Assumption $(A)$
\begin{align*}
d_{x_i} - b_{x_i}
&= \sum_{n=0}^{x_i -1} \left( d_{n+1} -d_n -b_{n+1} +b_n \right) -b_0\\
&\geq \lambda x_i -b_0,
\end{align*}

and Assumption $(B)$ gives
\begin{align*}
(q^{+}- q^{-})(x_i,M^{N}) \leq \alpha \left( x_i + M^{N}\right) \ \text{and} \ q^{+}(0,M^{N}) \leq \alpha M^{N} .
\end{align*}

Thus,
\begin{align*}
\mathcal{L}V(x)
&\leq -\lambda \sum_{i=1}^{N} x_i + b_0 \sum_{i=1}^{N} \mathds{1}_{x_i\geq 0} +\alpha \sum_{i=1}^{N} x_i + \alpha M^{N} \sum_{i=1}^{N} \mathds{1}_{x_i\geq 0}\\
&\leq -\kappa V(x) +  b_0 N.
\end{align*}

\bigskip 

\textbf{Non-explosion.}\\
Let $V$ be the Lyapunov function defined in Theorem \ref{th:lyapunov}, and for $x\in \mathbb{N}^{N}$ let $W$ be the function defined by
$$
W(x)= \dfrac{1}{N} \left( V(x)- \dfrac{b_0 N}{\kappa} \right).
$$

Then $W$ satisfies a simpler inequality than \eqref{eq:lya} given by
$$
\mathcal{L}W(x) \leq -\kappa W(x).
$$

Consider now the function $f:\mathbb{N}^{N} \times \mathbb{R}_+ \rightarrow \mathbb{R}$ defined by $f(x,t) = e^{\kappa t} W(x)$ and for $A$, let $\tau_A$ be the stopping time defined by $\tau_A= \inf \{t\geq 0, W(X_t)>A \}$. Then, as
$$
M_t = f(X_t,t)-f(X_0,0) - \int_{0}^{t} \left(\dfrac{\partial}{\partial_t}f + \mathcal{L}f\right)(X_s,s) ds \quad \text{is a martingale,}
$$
%$M_{t\wedge \tau_n}$ is a martingale and inequality \eqref{eq:LyaNonExplosion} gives for $x\in \mathbb{N}^{N}$
we have for $x\in \mathbb{N}^{N}$
$$
\E_x [f(X_{t\wedge \tau_A},t\wedge \tau_A)]= W(x)+\E_x \left[\int_{0}^{t\wedge \tau_A} \left(\dfrac{\partial}{\partial_t}f + \mathcal{L}f\right)(X_s,s) ds \right]. 
$$
But,
$$
\dfrac{\partial}{\partial_t}f + \mathcal{L}f \leq 0,
$$
thus,
\begin{align*}
\E_x[e^{\kappa (t\wedge \tau_A)}W(X_{t\wedge \tau_A})]
&\leq W(x).
\end{align*}
And finally by the definition of $\tau_A$ and $W$
\begin{align*}
\mathbb{P}_x (\tau_A <t) 
&\leq \dfrac{1}{A}\E_x \Big[ e^{\kappa \tau_A}W(X_{\tau_A}) \mathds{1}_{\tau_A \leq t}\Big]\\
&\leq \dfrac{1}{A} \left( W(x) - e^{\kappa t} \E_x[W(X_t) \mathds{1}_{\tau_A \geq t}] \right)\\
&\leq \dfrac{1}{A} \left(W(x)+ b_0 e^{\kappa t} \right).
\end{align*}

\bigskip 

\textbf{Positive recurrence.}\\
Let $A=\{x\in \mathbb{N}^{N}: \kappa V(x) < 2b_0 N \}$, $H_{A}$ the hitting time of $A$ and $H_{A}^{n} =H_{A}\wedge n$. As $\lim \limits_{\Vert x \Vert\rightarrow +\infty} V(x)=+\infty$, then $A$ is a finite set.
Applying the same argument for $f(x,t)= e^{\frac{\kappa}{2}t} V(x)$ gives
\begin{align*}
\E_x[e^{\frac{\kappa}{2}H_A^{n}}V(X_{H_A^{n}})]
&\leq V(x) \quad \forall x\notin A.
\end{align*}

By definition of $H_A$, we deduce that
$$
\E_x[e^{\frac{\kappa}{2} H_{A}^{n}}] \leq \dfrac{V(x)\kappa}{2b_0 N} \quad \forall x\notin A,
$$
and the monotonicity convergence theorem gives
\begin{equation}
\label{eq:posRecurrence}
\E_x[e^{\frac{\kappa}{2} H_{A}}] \leq \dfrac{V(x)\kappa}{2b_0N} \quad \forall x\notin A.
\end{equation}

Now, let $\sigma_A = \inf \{t > H_{A^{c}}, X_t \in A\}$ and $x\in A$. Using \eqref{eq:posRecurrence}, we have
\begin{align*}
\E_x[e^{\frac{\kappa}{2} \sigma_A}]
%&= \E_x[\E_x[e^{\frac{\kappa}{2} \sigma_A} | X_{H_A^{c}}]]
&= \E_x[\E_{X_{H_{A^{c}}}}[e^{\frac{\kappa}{2} \sigma_A}]]
\leq \dfrac{\kappa \E_x[V(X_{H_{A^{c}}})]}{2b_0 N} 
\end{align*} 

Using the fact that $\mathcal{L}V(x) < +\infty$ for $x\in A$ ends the proof.
\end{proof}

\bigskip

\textbf{Acknowledgement:} I would like to thank Philippe Robert for the paper's reference \cite{DTZ05} and my thesis advisors Amine Asselah, Djalil Chafa\"{i} for valuable discussions. I would also like to thank Bertrand Cloez, Florent Malrieu and Pierre-Andr\'{e} Zitt for valuable discussions.
% I would also like to thank Florence Merlev\`{e}de for her help and for generously sharing her computations on the deviation inequality in the i.i.d case. 
The work was supported by grants from R\'{e}gion Ile-de-France and I thank the support of the A*MIDEX grant
(n$°$ANR-11-IDEX-0001-02) funded by the French Government "Investissements d'Avenir" program.

\bibliographystyle{abbrv} %alpha abbrv

\begin{thebibliography}{10}

\bibitem{AFG}
A.~Asselah, P.~A. Ferrari, and P.~Groisman.
\newblock Quasistationary distributions and {F}leming-{V}iot processes in
  finite spaces.
\newblock {\em J. Appl. Probab.}, 48(2):322--332, 2011.

\bibitem{AFGJ}
A.~{Asselah}, P.~A. {Ferrari}, P.~{Groisman}, and M.~{Jonckheere}.
\newblock {Fleming-Viot selects the minimal quasi-stationary distribution: The
  Galton-Watson case}.
\newblock {\em ArXiv e-prints}, June 2012.

\bibitem{AT12}
A.~{Asselah} and M.-N. {Thai}.
\newblock {A note on the rightmost particle in a Fleming-Viot process}.
\newblock {\em ArXiv e-prints}, Dec. 2012.

\bibitem{BGV07}
F.~Bolley, A.~Guillin, and C.~Villani.
\newblock Quantitative concentration inequalities for empirical measures on
  non-compact spaces.
\newblock {\em Probability Theory and Related Fields}, 137(3-4):541--593, 2007.

\bibitem{BHIM}
K.~Burdzy, R.~Holyst, D.~Ingerman, and P.~March.
\newblock Configurational transition in a {F}leming - {V}iot-type model and
  probabilistic interpretation of {L}aplacian eigenfunctions.
\newblock {\em Journal of Physics A: Mathematical and General}, 29(11):2633,
  1996.

\bibitem{CDPP}
P.~Caputo, P.~Dai~Pra, and G.~Posta.
\newblock Convex entropy decay via the {B}ochner--{B}akry--{E}mery approach.
\newblock {\em Ann. Inst. H. Poincar{\'{e}} Probab. Statist.}, 45(3):734--753,
  08 2009.

\bibitem{CGM10}
P.~Cattiaux, A.~Guillin, and F.~Malrieu.
\newblock {Probabilistic approach for granular media equations in the non
  uniformly convex case.}
\newblock {\em {Probability Theory and Related Fields}}, 140(1-2):19--40, 2008.

\bibitem{CT13}
B.~{Cloez} and M.-N. {Thai}.
\newblock {Quantitative results for the Fleming-Viot Particle system and
  quasi-stationary distributions in discrete space}.
\newblock {\em ArXiv e-prints}, Dec. 2013.

\bibitem{DG88}
D.~{Dawson} and J.~{G\"artner}.
\newblock {Long time behaviour of interacting diffusions.}
\newblock {Stochastic calculus in application, Proc. Symp., Cambridge/UK 1987,
  Pitman Res. Notes Math. Ser. 197, 29-54 (1988).}, 1988.

\bibitem{DTZ05}
D.~Dawson, J.~Tang, and Y.~Zhao.
\newblock Balancing {Q}ueues by {M}ean {F}ield {I}nteraction.
\newblock {\em Queueing Systems}, 49(3-4):335--361, 2005.

\bibitem{DZ91}
D.~Dawson and X.~Zheng.
\newblock Law of large numbers and central limit theorem for unbounded jump
  mean-field models.
\newblock {\em Advances in Applied Mathematics}, 12(3):293 -- 326, 1991.

\bibitem{Feng94}
S.~Feng.
\newblock Large deviations for empirical process of mean-field interacting
  particle system with unbounded jumps.
\newblock {\em Ann. Probab.}, 22(4):2122--2151, 10 1994.

\bibitem{FZ92}
S.~Feng and X.~Zheng.
\newblock Solutions of a class of nonlinear master equations.
\newblock {\em Stochastic Processes and their Applications}, 43(1):65 -- 84,
  1992.

\bibitem{FM07}
P.~A. Ferrari and N.~Mari{\'c}.
\newblock Quasi stationary distributions and {F}leming-{V}iot processes in
  countable spaces.
\newblock {\em Electron. J. Probab.}, 12:no. 24, 684--702, 2007.

\bibitem{FG15}
N.~Fournier and A.~Guillin.
\newblock On the rate of convergence in wasserstein distance of the empirical
  measure.
\newblock {\em Probability Theory and Related Fields}, 162(3-4):707--738, 2015.

\bibitem{Joulin09}
A.~Joulin.
\newblock A new poisson-type deviation inequality for markov jump processes
  with positive wasserstein curvature.
\newblock {\em Bernoulli}, 15(2):pp. 532--549, 2009.

\bibitem{Kac56}
M.~Kac.
\newblock Foundations of kinetic theory.
\newblock In {\em Proceedings of the Third Berkeley Symposium on Mathematical
  Statistics and Probability, Volume 3: Contributions to Astronomy and
  Physics}, pages 171--197, Berkeley, Calif., 1956. University of California
  Press.

\bibitem{Malrieu01}
F.~Malrieu.
\newblock Logarithmic sobolev inequalities for some nonlinear pde's.
\newblock {\em Stochastic Processes and their Applications}, 95(1):109 -- 132,
  2001.

\bibitem{Malrieu03}
F.~Malrieu.
\newblock Convergence to equilibrium for granular media equations and their
  euler schemes.
\newblock {\em Ann. Appl. Probab.}, 13(2):540--560, 05 2003.

\bibitem{Maric14}
N.~Mari\'{c}.
\newblock Fleming-{V}iot particle system driven by a random walk on
  \$$\backslash$mathbb$\{$N$\}$\$.
\newblock {\em Journal of Statistical Physics}, 160(3):548--560, 2015.

\bibitem{McKean67}
H.~P. McKean.
\newblock {Propagation of chaos for a class of non-linear parabolic equations.}
\newblock In {\em Stochastic Differential Equations (Lecture Series in
  Differential Equations, Session 7, Catholic Univ., 1967)}, pages 41--57. Air
  Force Office Sci. Res., Arlington, Va., 1967.

\bibitem{Meleard96}
S.~M\'{e}l\'{e}ard.
\newblock Asymptotic behaviour of some interacting particle systems;
  {M}c{K}ean-{V}lasov and {B}oltzmann models.
\newblock In D.~Talay and L.~Tubaro, editors, {\em Probabilistic Models for
  Nonlinear Partial Differential Equations}, volume 1627 of {\em Lecture Notes
  in Mathematics}, pages 42--95. Springer Berlin Heidelberg, 1996.

\bibitem{MT93}
S.~P. Meyn and R.~L. Tweedie.
\newblock Stability of {M}arkovian processes. {III}. {F}oster-{L}yapunov
  criteria for continuous-time processes.
\newblock {\em Adv. in Appl. Probab.}, 25(3):518--548, 1993.

\bibitem{ST85}
T.~Shiga and H.~Tanaka.
\newblock Central limit theorem for a system of {M}arkovian particles with mean
  field interactions.
\newblock {\em Zeitschrift f{\"{u}}r Wahrscheinlichkeitstheorie und Verwandte
  Gebiete}, 69(3):439--459, 1985.

\bibitem{Sznitman84}
A.-S. Sznitman.
\newblock Nonlinear reflecting diffusion process, and the propagation of chaos
  and fluctuations associated.
\newblock {\em Journal of Functional Analysis}, 56(3):311 -- 336, 1984.

\bibitem{Sznitman89}
A.-S. Sznitman.
\newblock Topics in propagation of chaos.
\newblock In {\em \'{E}cole d'\'{E}t\'e de {P}robabilit\'es de {S}aint-{F}lour
  {XIX}---1989}, volume 1464 of {\em Lecture Notes in Math.}, pages 165--251.
  Springer, Berlin, 1991.

\bibitem{HT81}
H.~Tanaka and M.~Hitsuda.
\newblock Central limit theorem for a simple diffusion model of interacting
  particles.
\newblock {\em Hiroshima Math. J.}, 11(2):415--423, 1981.

\end{thebibliography}

\end{document}